\newcommand{\N}{\mathbb{N}}
\newcommand{\R}{\mathbb{R}}
\newcommand{\C}{\mathbb{C}}
\newcommand{\overbar}[1]{\mkern 1.0mu\overline{\mkern-1.0mu#1\mkern-1.0mu}\mkern 1.0mu}
\newtheorem{theorem}{Theorem}[section]
\newtheorem{lemma}[theorem]{Lemma}
\newtheorem{remark}[theorem]{Remark}
\def\dS{\,{\rm d}S}
\DeclareMathOperator*{\argmin}{arg\,min}
\begin{document}
\title[Smoothened complete electrode model]{Smoothened complete electrode model}

\author{Nuutti Hyv\"onen}
\address{Aalto University, Department of Mathematics and Systems Analysis, P.O. Box 11100, FI-00076 Aalto, Finland} 
\email{nuutti.hyvonen@aalto.fi}

\author{Lauri Mustonen}
\address{Aalto University, Department of Mathematics and Systems Analysis, P.O. Box 11100, FI-00076 Aalto, Finland} 
\email{lauri.mustonen@aalto.fi}

\thanks{This work was supported by the Academy of Finland (decision 267789) and the Finnish Foundation for Technology Promotion TES}

\subjclass[2010]{35Q60, 35J25, 65N21}

\keywords{Electrical impedance tomography, complete electrode model, 
inverse elliptic boundary value problems, regularity}

\begin{abstract}
This work reformulates the complete electrode model of electrical impedance tomography in order to enable more efficient numerical solution. The model traditionally assumes constant contact conductances on all electrodes, which leads to a discontinuous Robin boundary condition since the gaps between the electrodes can be described by vanishing conductance. As a consequence, the regularity of the electromagnetic potential is limited to less than two square-integrable weak derivatives, which negatively affects the convergence of,~e.g.,~the finite element method. In this paper, a smoothened model for the boundary conductance is proposed, and the unique solvability and improved regularity of the ensuing boundary value problem are proven. Numerical experiments demonstrate that the proposed model is both computationally feasible and also compatible with real-world measurements. In particular, the new model allows faster convergence of the finite element method. 
\end{abstract}

\maketitle

\section{Introduction}
\label{sec:intro}

{\em Electrical impedance tomography} (EIT) is a noninvasive imaging technique based on controlling and measuring electric currents and voltages on the surface of the imaged object.
The aim is to reconstruct the electrical conductivity (or admittivity, resistivity, or impedivity) inside the object.
Applications of EIT include biomedical imaging \cite{Bayford06}, nondestructive testing \cite{Karhunen10}, and process tomography \cite{Seppanen08}.
The reconstruction task is an extensively studied inverse problem for which both direct and iterative methods have been proposed \cite{Kaipio00,Lionheart04,Uhlmann09}.

Several mathematical models have been applied to incorporating boundary conditions in the forward problem of EIT including those discussed in \cite{Cheng89,Somersalo92}.
The simplest one is the continuum model, which assumes a (typically smooth) Neumann boundary condition.
This is useful for theoretical considerations and convenient in numerical computations, but does not typically result in accurate reconstructions because practical measurement setups employ a finite number of electrodes, which is not taken into account in the continuum model.
The point electrode model is mainly useful when the electrodes are small and the reconstruction is based on difference measurements \cite{Hanke11b}.
The so called shunt model correctly models the geometry of the electrodes but neglects the thin resistive layer that may appear at the contact between the electrodes and the object.
The presence of this layer is included in the {\em complete electrode model} (CEM) that has become the standard for computing reconstructions in practical applications \cite{Cheng89}.
Loosely speaking, the shunt model can be regarded as the limit of the CEM when the contact resistances tend to zero \cite{Darde16}.

When the CEM is employed, the contact conductances (or admittances, resistances, or impedances) are usually not known, but they are estimated along with the interior conductivity \cite{Heikkinen02}. One may also simultaneously reconstruct the electrode locations and the shape of the imaged object \cite{Darde13a,Darde13b,Nissinen11a}.
However, even if all these parameters were known, an inherent property of the traditional CEM is that the employed ``discontinuous'' Robin-type boundary condition causes the regularity of the electromagnetic potential to be limited, namely, it is of the Sobolev class $H^{2-\epsilon}$, $\epsilon > 0$.
The same conclusion applies to an even greater extent to the shunt model for which the potential only exhibits $H^{3/2-\epsilon}$-regularity.
Regarding numerical computations, the lack of smoothness makes it difficult to construct forward solvers that converge fast; in particular, the efficient use of  {\em finite element method}s (FEM) of higher order is prevented. This is an issue for iterative reconstruction algorithms that require repetitive and accurate
solutions of the forward problem.
In addition, certain quantities that are derived from forward solutions of the CEM, such as those needed when computing shape derivatives of electrode measurements (cf.~\cite{Darde13a,Darde13b}), are expected to suffer in accuracy 
even when first-order FEM is used 
\cite[Remark~2.4]{Darde13b}.

To overcome these problems arising from the discontinuity of a conductance coefficient in the Robin boundary condition, we propose a smoothened version of the CEM that 
exploits nonhomogeneous conductances and is more suitable for numerical computations.
It is shown that arbitrarily high regularity for the (interior) electromagnetic potential in EIT can be achieved while only slightly deviating from the standard CEM, assuming the conductivity and the object boundary are smooth enough.
Moreover, it is numerically demonstrated that 
forward computations exhibit faster convergence (both in practice and asymptotically). 
According to our preliminary tests, a version of the smoothened model is in approximately as good agreement with experimental data as the standard CEM and reconstructions based on the two models are almost indistinguishable.

It should be emphasized that we do not claim that the smoothened CEM is a more appropriate model for EIT from the standpoint of the physical phenomena occurring at the electrode-object interface. Our assertion is merely that the new model is computationally more efficient and predicts the {\em electrode measurements} of EIT with accuracy comparable to the traditional CEM. 
How well the new model --- or the new family of models --- predicts the behavior of the electromagnetic potential in the interior of the imaged object remains an open question; the same is actually true also for the traditional CEM as its performance has only been validated in regard to electrode measurements \cite{Cheng89}. (Observe that there is no actual reason to expect that the traditional assumption of having a constant contact conductance on each electrode is completely accurate either. In fact, the standard CEM can be viewed as a special case of the smoothened CEM.)

Although we 
only discuss EIT, it is worth noting that same kinds of electrode models can be used in, e.g., {\em electrical capacitance tomography} (ECT) \cite{Fang16} and {\em electroencephalography} (EEG) \cite{Pursiainen12}.
In particular, since ECT is mathematically equivalent to EIT, our theoretical results apply directly to ECT as well.

This text is organized as follows.
In the next section, we introduce the smoothen\-ed CEM for EIT and prove the unique solvability of the corresponding elliptic boundary value problem.
Our main theoretical result addressing the regularity of the electromagnetic potential is also formulated and proven in that section.
Section~\ref{sec:Frechet} reviews the Fr\'echet differentiability of electrode potentials with respect to the shape of the imaged object, which provides an example of a setting where the new smoothened model clearly prevails.
Numerical examples demonstrating the improved convergence are presented in Section~\ref{sec:numerics}, where the smoothened model is also compared to the traditional CEM and to experimental data. In addition, example reconstructions based on water tank measurements are presented. Finally, conclusions are drawn in Section~\ref{sec:conclusion}.

\section{Smoothened complete electrode model}
\label{sec:CEM}

A physical body imaged by EIT is modeled as a bounded Lipschitz domain $\Omega \subset \R^n$, $n=2$ or $3$. The boundary $\partial \Omega$ is partially covered by $M \in \N \setminus \{ 1 \}$ electrodes $\{ E_m\}_{m=1}^M$ that are identified with nonempty, connected, open surface patches and assumed to be well-separated,~i.e.,~$\overline{E}_m\cap \overline{E}_l = \emptyset$ if $m\not= l$.  We denote $E = \cup E_m$. The net currents $I_m\in\C $, $m=1, \dots, M$, are driven through the corresponding electrodes and the resulting constant electrode potentials $U_m \in \C$, $m=1, \dots, M$, are measured. Due to the conservation of electric charge and under the reasonable assumption that there are no sinks or sources inside $\Omega$, any realizable current pattern $I = [I_1,\dots,I_M]^{\rm T}$ belongs to the subspace
\[ 
\C^M_\diamond \, := \, \Big\{J \in\C^M\,\Big|\, \sum_{m=1}^M J_m = 0\Big\}.
\]
The electrode potential vector $U = [U_1,\dots,U_M]^{\rm T}$ is identified with
\begin{equation}
\label{eq:piecewise}
U \, = \, \sum_{m=1}^M U_m \chi_m 
\end{equation}
where $\chi_m$ is the characteristic function of $E_m \subset \partial \Omega$. Whether $U$ refers to such a piecewise constant function supported on $\overline{E}$ or to a vector of $\C^M$ should be clear from the context; in particular, under an integral a capital letter always refers to a piecewise constant function vanishing in between the electrodes.  The real-symmetric admittivity distribution $\sigma\in L^{\infty}(\Omega, \C^{n \times n})$ inside $\Omega$ is assumed to satisfy
\begin{equation}
\label{eq:sigma}
{\rm Re} (\sigma \xi \cdot \overbar{\xi}) \geq \varsigma_- \| \xi \|_2^2,
\end{equation}
for all $\xi \in \C^n$ almost everywhere in $\Omega$ with $\varsigma_->0$ being some positive constant.
In other words, the imaged body is allowed to be characterized by anisotropic conductivity and permittivity but these coefficients are required to be symmetric and the conductivity, in addition, strictly positive definite (cf., e.g., \cite{Vauhkonen97}).
Here and in what follows, $\|  \cdot \|_2$ denotes the Euclidean norm of a finite-dimensional vector.

The CEM is a mathematical model that accurately predicts real-life EIT measurements, i.e.,
its validity has been confirmed in regard to data collected at electrodes \cite{Cheng89}. We consider a nonstandard formulation of the CEM: The electromagnetic potential $u$ inside $\Omega$ and the piecewise constant electrode potential $U$ satisfy
\begin{equation}
\label{eq:cemeqs}
\begin{array}{ll}
\displaystyle{\nabla \cdot(\sigma\nabla u) = 0 \qquad}  &{\rm in}\;\; \Omega, \\[6pt] 
{\displaystyle {\nu\cdot\sigma\nabla u} = \zeta (U - u) } \qquad &{\rm on}\;\; \partial \Omega, \\[2pt] 
{\displaystyle \int_{E_m}\nu\cdot\sigma\nabla u\,{\rm d}S} = I_m, \qquad & m=1,\ldots,M, \\[4pt]
\end{array}
\end{equation}
interpreted in the weak sense. Here, $ \nu \in L^\infty(\partial \Omega, \R^n)$ denotes the exterior unit normal of $\partial\Omega$ and $\zeta \in L^\infty(\partial \Omega)$ describes the contact admittance over $\partial \Omega$. The gaps between the electrodes can be characterized by vanishing admittance. Moreover, the conductance,~i.e.,~the real part of the admittance, cannot be negative, and to be able to drive currents through the electrodes, the conductance must not vanish everywhere on any of the electrodes. To summarize, it is physically reasonable to assume 
\begin{equation}
\label{eq:zeta}
{\rm Re} (\zeta) \geq 0,
\qquad \zeta_{\partial \Omega \setminus{\overline{E}}} \equiv 0, \qquad
{\rm Re}\big(\zeta|_{E_m}\big) \not\equiv 0 
\end{equation}
for all $m=1, \dots, M$ in the topology of $L^\infty(\partial \Omega)$. Take note that the second assumption on $\zeta$ reduces the second equation of \eqref{eq:cemeqs} into a homogeneous Neumann condition on $\partial \Omega \setminus{\overline{E}}$,~i.e.,~no current flows through the object boundary in between the electrodes. 

A  physical justification of \eqref{eq:cemeqs} can be found in \cite{Cheng89}, where the second condition is divided into two parts as
\begin{equation}
\label{eq:alternate}
\begin{array}{ll}
{\displaystyle{\nu\cdot\sigma\nabla u} = 0 }\qquad &{\rm on}\;\;\partial\Omega\setminus\overbar{E},\\[6pt] 
{\displaystyle {u + z_m \nu\cdot\sigma\nabla u} = U_m } \qquad &{\rm on}\;\; E_m, \quad m=1, \dots, M, 
\end{array}
\end{equation}
and the contact impedances $z_m := (1/\zeta)|_{E_m}$, $m=1, \dots, M$, are assumed to be constants. Although the CEM has previously been analyzed also for nonconstant contact impedances (cf.~\cite{Hyvonen04,Winkler14b}), to the authors' knowledge all previous mathematical works on the CEM assume the impedances are bounded away from infinity. As the assumptions \eqref{eq:zeta} allow the contact admittances to vanish on some subsets of the electrodes, the unique solvability of \eqref{eq:cemeqs} does not directly follow from previous analyses but a bit of extra work is required.

We look for the solution of \eqref{eq:cemeqs} in the quotient space $\mathcal{H}^1$, with the definition
\begin{align*}
 \mathcal{H}^s &:= \big\{ \{ (v + c, V + c {\bf 1}) \, | \, c \in \C \} \, \big| \, (v, V) \in H^s(\Omega)\oplus \C^M \big\} \\[1mm]
& \simeq \Big\{ \big\{ \big (v + c, \sum_m (V_m + c) \chi_m \big) \, | \, c \in \C \big\} \, \Big| \, (v, V) \in H^s(\Omega)\oplus \C^M \Big\}
\end{align*}
for $s \in \R$ with ${\bf 1} := [1, \dots, 1]^{\rm T}\in \R^M$. 
Here, ``$\simeq$'' is to be understood via an isomorphic identification of vectors and piecewise constant functions on the electrodes. The use of a quotient structure reflects the freedom in the choice of the ground level of potential: All elements of $H^s(\Omega)\oplus \C^M$ that differ by an additive constant are identified as an equivalence class. In particular, when the second component of an element of $\mathcal{H}^s$ is interpreted as a piecewise constant function on the electrodes, the additive constant is also supported on $\overline{E}$.

 To prove the unique solvability of \eqref{eq:cemeqs}, notice first that the standard quotient norm for $\mathcal{H}^1$ is defined by 
$$
\|(v,V)\|_{\mathcal{H}^1} = \inf_{c\in\C}\Big( \|v-c\|_{H^1(\Omega)}^2 + \| V - c {\bf 1}\|_2^2 \Big)^{1/2}.
$$
Moreover, by following the line of reasoning in \cite{Somersalo92}, one sees that the variational formulation of \eqref{eq:cemeqs} is to find $(u,U) \in \mathcal{H}^1$ that satisfies
\begin{equation}
\label{eq:weak}
B\big((u,U),(v,V)\big)  \,=  \, I\cdot \overbar{V} \qquad {\rm for} \ {\rm all} \ (v,V) \in  \mathcal{H}^1,
\end{equation}
where ``$\,\cdot\,$'' denotes the real inner product and the sesquilinear form $B: \mathcal{H}^1 \times \mathcal{H}^1 \to \C$ is defined by
\begin{equation}
\label{eq:sesqui}
B\big((w,W),(v,V)\big) = \int_\Omega \sigma\nabla w\cdot \nabla \overbar{v} \,{\rm d}x + \int_{\partial \Omega} \zeta (W-w)(\overbar{V}-\overbar{v})\,{\rm d}S,
\end{equation}
with $W, V \in \C^M$ identified with the corresponding piecewise constant functions.

\begin{lemma}
\label{lemma:coerc}
Under the assumptions \eqref{eq:sigma} and \eqref{eq:zeta}, the sesquilinear form $B: \mathcal{H}^1 \times \mathcal{H}^1 \to \C$ is well-defined, bounded and coercive, that is,
$$
\big| B\big((w,W), (v,V) \big) \big| \, \leq \, C \|(w,W) \|_{\mathcal{H}^1}  \|(v,V) \|_{\mathcal{H}^1}   
$$
and
$$
{\rm Re} \Big( B \big((v,V), (v,V) \big) \Big) \, \geq \, c \|(v,V) \|_{\mathcal{H}^1}^2, 
$$
where $c, C > 0$ do not depend on $(w,W), (v,V) \in \mathcal{H}^1$.
\end{lemma}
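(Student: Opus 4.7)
The plan is to verify the three properties in order: well-definedness on the quotient, boundedness, and then coercivity, which is the only nontrivial step.

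For well-definedness, one simply notes that shifting a representative $(w,W)$ by $(c,c{\bf 1})$ leaves $\nabla w$ untouched and also preserves $W-w$ (after identifying ${\bf 1}c$ with the piecewise constant function $c$ supported on $\overline E$), so both integrands in \eqref{eq:sesqui} are invariant. Hence $B$ descends to $\mathcal H^1\times\mathcal H^1$.

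For boundedness, I apply Cauchy–Schwarz to each integral, using $\|\sigma\|_\infty$ in the bulk term and $\|\zeta\|_\infty$ together with the trace inequality $\|w\|_{L^2(\partial\Omega)}\le C\|w\|_{H^1(\Omega)}$ on the boundary term; the piecewise constant functions obey $\|W\|_{L^2(\partial\Omega)}\le C\|W\|_2$ trivially since the electrodes have finite surface measure. Writing $|B((w,W),(v,V))|\le C\,(\|w-c_1\|_{H^1}^2+\|W-c_1{\bf 1}\|_2^2)^{1/2}(\|v-c_2\|_{H^1}^2+\|V-c_2{\bf 1}\|_2^2)^{1/2}$ for arbitrary $c_1,c_2\in\C$ (by the invariance used above) and taking the infimum over $c_1,c_2$ yields the stated bound with the quotient norms.

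Coercivity is the main obstacle; I would choose a representative with zero mean and then combine Poincaré on $\Omega$ with a localized lower bound on $\mathrm{Re}(\zeta)$. Starting from
\[
\mathrm{Re}\,B((v,V),(v,V))\,\ge\,\varsigma_-\|\nabla v\|_{L^2(\Omega)}^2+\int_{\partial\Omega}\mathrm{Re}(\zeta)\,|V-v|^2\,\dS,
\]
the third condition in \eqref{eq:zeta} guarantees, for every $m$, the existence of $\epsilon_m>0$ and a measurable $F_m\subset E_m$ with $|F_m|>0$ and $\mathrm{Re}(\zeta)\ge\epsilon_m$ on $F_m$ (otherwise $\mathrm{Re}(\zeta)|_{E_m}$ would vanish a.e.\ and hence in $L^\infty$). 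Picking the representative $(v,V)$ with $\int_\Omega v\,\dx=0$, Poincaré yields $\|v\|_{H^1(\Omega)}^2\le(1+C_P^2)\|\nabla v\|_{L^2(\Omega)}^2$. For the electrode components, the elementary estimate $|V_m|^2\le 2|V_m-v|^2+2|v|^2$ integrated over $F_m$ gives
\[
|V_m|^2\,\le\,\frac{2}{\epsilon_m|F_m|}\int_{F_m}\mathrm{Re}(\zeta)|V_m-v|^2\,\dS+\frac{2}{|F_m|}\|v\|_{L^2(\partial\Omega)}^2,
\]
and the trace theorem combined with the Poincaré bound above controls $\|v\|_{L^2(\partial\Omega)}^2$ by $\|\nabla v\|_{L^2(\Omega)}^2$. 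Summing over $m$ and adding the estimate for $\|v\|_{H^1(\Omega)}^2$ yields $\|v\|_{H^1(\Omega)}^2+\|V\|_2^2\le C\,\mathrm{Re}\,B((v,V),(v,V))$. Since this specific representative is admissible in the infimum defining the quotient norm, the claimed coercivity follows.

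The delicate point throughout is that $\zeta$ is permitted to vanish on substantial subsets of each electrode, so Poincaré-type arguments relying on $\mathrm{Re}(\zeta)$ being bounded below on all of $E_m$ are unavailable; the fix is the extraction of the sets $F_m$ above, which is exactly what the nondegeneracy assumption \eqref{eq:zeta} supplies.
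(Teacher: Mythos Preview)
Your proof is correct and follows essentially the same approach as the paper. The paper also extracts subsets $e_m\subset E_m$ of positive measure on which $\mathrm{Re}(\zeta)\ge\zeta_->0$, then combines Poincar\'e and the trace theorem to bound the quotient norm by $\|\nabla v\|_{L^2(\Omega)}^2+\|V-v\|_{L^2(e)}^2$; the only cosmetic difference is that the paper keeps the infimum over $c$ explicit throughout, whereas you fix the zero-mean representative first and bound that particular $\|v\|_{H^1}^2+\|V\|_2^2$.
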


\begin{proof}
First of all, due to the second condition of \eqref{eq:zeta}, there is no ambiguity in the definition of $B$ on $\mathcal{H}^1 \times \mathcal{H}^1$,~i.e.,~the value $B((w,W),(v,V))$ does not depend on the particular representatives of the equivalence classes $(w,W), (v,V) \in \mathcal{H}^1$. Moreover, the continuity of $B$ can be proved by following the argumentation in \cite[proof of Lemma~2.5]{Hyvonen04}.

Since $\zeta$ does not vanish identically almost everywhere on any electrode (cf.~\eqref{eq:zeta}), there exist open subsets $e_m \subset E_m$, $m=1,\dots,M$, of nonzero measure and a constant $\zeta_- > 0$ such that
\begin{equation}
\label{eq:trueelec}
{\rm Re} ( \zeta) \geq \zeta_-  \qquad {\rm a.e.} \ {\rm on} \ e := \bigcup_{m=1}^M e_m. 
\end{equation}
To deduce the coercivity, note first that
$$
\| V - c {\bf 1}\|_2^2 \leq C \| V - c \|_{L^2(e)}^2 
\leq C \Big(\| V - v \|_{L^2(e)}^2 + \| v - c \|_{L^2(\partial \Omega)}^2 \Big)
$$
due to the triangle inequality. Hence, by the trace theorem,
\begin{align}
\label{eq:almostcoer}
\| (v,V) \|_{\mathcal{H}^1}^2 
&\leq C \inf_{c\in\C}\Big( \|v-c\|_{H^1(\Omega)}^2 + \| V - v \|_{L^2(e)}^2 \Big) \nonumber \\[1mm]
& \leq C \Big( \| \nabla v \|_{L^2(\Omega)}^2 +  \| V - v \|_{L^2(e)}^2 \Big),
\end{align}
where the second step is a consequence of the Poincar\'e inequality. The assertion now follows by combining \eqref{eq:almostcoer} with the estimate
\begin{align*}
{\rm Re}  \Big( B \big((v,V), (v,V) \big) \Big) & \geq \, \varsigma_- \int_\Omega |\nabla v |^2 \,{\rm d}x + \zeta_- \int_{e} |V-v|^2\,{\rm d}S 
\end{align*}
that is induced by \eqref{eq:sigma}, \eqref{eq:zeta} and \eqref{eq:trueelec}.
\end{proof}

\begin{theorem}
\label{thm:existence}
Under the assumptions \eqref{eq:sigma} and \eqref{eq:zeta}, the problem \eqref{eq:cemeqs} has a unique solution $(u,U) \in \mathcal{H}^1$ for any 
current pattern $I \in \C_\diamond^M$. Moreover,
$$
\| (u,U) \|_{\mathcal{H}^1} \leq C \| I \|_2,
$$
where $C= C(\Omega,E,\sigma,\zeta)>0$ is independent of $I$.
\end{theorem}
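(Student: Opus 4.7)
The plan is to obtain existence, uniqueness, and the continuous dependence bound as a direct application of the Lax--Milgram lemma to the variational formulation \eqref{eq:weak}, using Lemma~\ref{lemma:coerc} to supply the bilinear-form hypotheses. The only piece left to check is that the right-hand side defines a bounded antilinear functional on the quotient space $\mathcal{H}^1$, and this is precisely where the charge-conservation constraint $I \in \C_\diamond^M$ enters.

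First I would verify well-posedness of the functional $\ell : \mathcal{H}^1 \to \C$, $(v,V) \mapsto I \cdot \overbar{V}$. For any representative shift $(v+c, V+c{\bf 1})$ the value changes by $\bar c \sum_{m=1}^M I_m$, which vanishes exactly because $I \in \C_\diamond^M$; hence $\ell$ descends to the quotient. Boundedness then follows by taking $c$ to be (approximately) the minimizer of the quotient norm: $|\ell(v,V)| = |I\cdot(\overbar{V}-\bar c{\bf 1})| \le \|I\|_2\,\|V-c{\bf 1}\|_2 \le \|I\|_2\,\|(v,V)\|_{\mathcal{H}^1}$, so $\ell$ is antilinear and continuous with $\|\ell\|_{(\mathcal{H}^1)^*} \le \|I\|_2$.

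Next I would invoke Lemma~\ref{lemma:coerc}: $B$ is a bounded, coercive sesquilinear form on the Hilbert space $\mathcal{H}^1$ (the quotient inherits completeness from $H^1(\Omega) \oplus \C^M$ modulo its one-dimensional subspace of constants). The Lax--Milgram lemma then yields a unique $(u,U) \in \mathcal{H}^1$ satisfying \eqref{eq:weak} for every $(v,V) \in \mathcal{H}^1$, together with the estimate
\[
\|(u,U)\|_{\mathcal{H}^1} \le \frac{1}{c}\,\|\ell\|_{(\mathcal{H}^1)^*} \le \frac{1}{c}\,\|I\|_2,
\]
where $c>0$ is the coercivity constant from Lemma~\ref{lemma:coerc}. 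Setting $C = 1/c$, which depends on $\Omega$, $E$, $\sigma$, $\zeta$ only through the constants appearing in \eqref{eq:sigma}, \eqref{eq:zeta}, \eqref{eq:trueelec} and in the trace/Poincar\'e estimates, gives the claimed bound.

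Finally I would confirm that the variational solution is indeed a solution of \eqref{eq:cemeqs} in the weak sense. Testing \eqref{eq:weak} with pairs $(v,0)$ for $v \in C_c^\infty(\Omega)$ recovers the equation $\nabla\cdot(\sigma\nabla u)=0$ in $\Omega$; testing with general $(v,0)$, $v\in H^1(\Omega)$, recovers the Robin condition $\nu\cdot\sigma\nabla u = \zeta(U-u)$ on $\partial\Omega$ (which automatically reduces to the homogeneous Neumann condition on $\partial\Omega\setminus\overbar{E}$ by the second assumption in \eqref{eq:zeta}); and testing with $(0,V)$ for $V \in \C^M$ and integrating the Robin condition against $\chi_m$ yields the current constraint $\int_{E_m}\nu\cdot\sigma\nabla u\,\dS = I_m$. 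I do not foresee a serious obstacle: all the delicate work — in particular handling the possible vanishing of $\zeta$ on subsets of the electrodes — has already been done in Lemma~\ref{lemma:coerc}, and the only step requiring care here is the compatibility check that makes $\ell$ well-defined on the quotient.
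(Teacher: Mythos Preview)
Your proposal is correct and follows essentially the same route as the paper: verify that $I\in\C_\diamond^M$ makes $(v,V)\mapsto I\cdot\overbar V$ a well-defined bounded antilinear functional on the quotient, then apply Lemma~\ref{lemma:coerc} and Lax--Milgram to~\eqref{eq:weak}. Your additional paragraph confirming that the variational solution indeed satisfies the system~\eqref{eq:cemeqs} is not in the paper's proof (the equivalence having been asserted earlier by reference), but it is a harmless and arguably helpful elaboration.
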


\begin{proof}
Since $I \in \C_\diamond^M$ has vanishing mean, the map
$$
\mathcal{H}^1 \ni (v,V) \mapsto I \cdot \overline{V} \in \C
$$
is well-defined (and antilinear). Moreover, since
$$
\big| I \cdot \overline{V} \big| \leq \|I \|_2 \inf_{c \in \C} \|V - c {\bf 1}\|_2 \leq \|I\|_2 \, \| (v,V) \|_{\mathcal{H}^1},
$$
the claim follows by applying Lemma~\ref{lemma:coerc} and the Lax--Milgram theorem to the variational formulation \eqref{eq:weak}.
\end{proof}

If one resorts to the standard formulation of the CEM and replaces the second condition of \eqref{eq:cemeqs} by \eqref{eq:alternate} with contact impedances that are bounded away from infinity, the highest Sobolev regularity that the interior potential can, in general, exhibit is $u \in H^{2-\epsilon}(\Omega)$ due to the abrupt change of the boundary condition from Robin to homogeneous Neumann at $\partial E$ (see,~e.g.,~\cite{Costabel96}). With this in mind, an intriguing property of \eqref{eq:cemeqs} is that the smoothness of the interior electromagnetic potential is directly controlled by the smoothness of the contact admittance $\zeta: \partial \Omega \to \C$, assuming that the boundary $\partial \Omega$ and the admittivity $\sigma$ are smooth enough. We start with two simple lemmas:

\begin{lemma}
\label{lemma:multipli}
Assume that $\partial \Omega$ is of class $C^\infty$, $\Gamma \subset \partial \Omega$ is open with a Lipschitz boundary or $\Gamma = \partial \Omega$, and $\eta \in H^s(\Gamma)$ for some $s > (n-1)/2$. Then the multiplication operator
$$
\mathcal{M}_{\eta}: v \mapsto \eta v, \qquad H^r(\Gamma) \to H^r(\Gamma),
$$
is bounded for any $-s < r \leq s$. More precisely,
$$
\| \eta v \|_{H^r(\Gamma)} \leq C \| \eta \|_{H^s(\Gamma)} \| v \|_{H^r(\Gamma)}, \qquad -s < r \leq s,
$$
where $C = C(s,r,\Gamma) > 0$ is independent of $\eta$ and $v$.
\end{lemma}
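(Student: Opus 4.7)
The natural plan is the classical three-step argument: establish the top endpoint $r=s$, interpolate down to $r=0$, and dualize for negative $r$.

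First I would dispose of the case $r=s$ by appealing to the Banach algebra property: for any $(n-1)$-dimensional compact Lipschitz manifold and any $s>(n-1)/2$, the Sobolev space $H^s$ is a Banach algebra, so
\[
\|\eta v\|_{H^s(\Gamma)} \leq C\|\eta\|_{H^s(\Gamma)}\|v\|_{H^s(\Gamma)}.
\]
For $\Gamma=\partial\Omega$ (a smooth closed manifold) this is textbook (localize by a smooth partition of unity, pull back to $\mathbb{R}^{n-1}$, apply the Fourier-side proof). For an open Lipschitz $\Gamma\subset\partial\Omega$, I would extend $\eta$ and $v$ to the closed manifold $\partial\Omega$ with controlled $H^s$-norms (Stein-type extension from a Lipschitz domain on a smooth manifold), apply the closed-manifold result there, and restrict back.

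Next I would handle $0\leq r<s$ by interpolation. The $r=0$ endpoint is immediate from the Sobolev embedding $H^s(\Gamma)\hookrightarrow L^\infty(\Gamma)$, valid precisely because $s>(n-1)/2$:
\[
\|\eta v\|_{L^2(\Gamma)} \leq \|\eta\|_{L^\infty(\Gamma)}\|v\|_{L^2(\Gamma)} \leq C\|\eta\|_{H^s(\Gamma)}\|v\|_{L^2(\Gamma)}.
\]
Real (or complex) interpolation between $r=0$ and $r=s$ then yields the bound with constant linear in $\|\eta\|_{H^s(\Gamma)}$ for every intermediate $r$.

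Finally, for $-s<r<0$ I would use duality. Since $0<-r<s$, the already-proven case gives that $\mathcal{M}_{\overline\eta}$ is bounded on $H^{-r}(\Gamma)$; identifying $H^r(\Gamma)$ as the dual of $H^{-r}(\Gamma)$ (with the $L^2(\Gamma)$ pairing) shows $\mathcal{M}_\eta$ is its Banach-space adjoint and inherits the same bound. The main technical obstacle I anticipate is the open-$\Gamma$ case: one must choose the correct Sobolev scale (standard $H^r(\Gamma)$ versus $\widetilde H^r(\Gamma)$) so that the duality $(H^r)^*=H^{-r}$ used above holds and so that multiplication by a function supported in $\Gamma$ is well defined for the negative-index spaces; the cleanest route is to extend by zero outside $\Gamma$ into $\partial\Omega$, carry out the whole argument on the smooth closed manifold $\partial\Omega$, and then restrict, which avoids any ambiguity in the function-space conventions on $\Gamma$ itself.
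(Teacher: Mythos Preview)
Your three-step argument (Banach-algebra property at $r=s$, interpolation down to $r=0$, duality for negative $r$) is correct and is the standard elementary route to this multiplication estimate. The paper, by contrast, gives essentially no argument: it simply cites a ready-made pointwise-multiplier theorem from Runst--Sickel (\emph{Sobolev Spaces of Fractional Order, Nemytskij Operators, and Nonlinear PDEs}, p.~190, Theorem~1(i)), specializing the parameters to $s_1=r$, $s_2=s$, $p=q=q_1=q_2=2$, which delivers the full range $-s<r\le s$ in one stroke. Your approach is more self-contained and transparent; the paper's is more economical but leans on heavy external machinery. Your identification of the open-$\Gamma$ duality subtlety is accurate, and reducing everything to the closed manifold $\partial\Omega$ is the right fix---but note a small slip: ``extend by zero'' is not the tool for $\eta$, since zero-extension does not preserve $H^s$ regularity for $s>1/2$. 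Use instead the Stein-type extension you already invoked for $\eta$, define $H^r(\Gamma)$ for all $r$ as the restriction space from $H^r(\partial\Omega)$, and then the extend/multiply/restrict argument goes through uniformly in $r$ without a separate duality step on $\Gamma$ itself.
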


\begin{proof}
The claim follows from,~e.g.,~\cite[p.~190, Theorem~1, (i)]{Runst96} by choosing $s_1 = r$, $s_2 = s$, $p=q=q_1=q_2=2$; see also \cite[Propositions on p.~14 and p.~150]{Runst96}.
\end{proof}

Notice that the assumption $s > (n-1)/2$, which recurs many times in the following, ensures that $H^s(\partial \Omega)$ is continuously embedded in the Banach space of continuous functions $\mathcal{C}(\partial \Omega)$ by virtue of the Sobolev embedding theorem.

\begin{lemma}
\label{lemma:dummy}
Assume that $\partial \Omega$ is of class $\mathcal{C}^\infty$. Then, for any $(w,W) \in \mathcal{H}^{r+1/2}$ with $r > 0$,
$$
\| W - w \|_{H^{r}(E)} \leq C \| (w, W) \|_{\mathcal{H}^{r+1/2}},
$$
where $C = C(r,E,\Omega) >0$.
\end{lemma}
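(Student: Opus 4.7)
The key idea is that the quantity $W-w$ on $E$ is invariant under the quotient equivalence that defines $\mathcal{H}^{r+1/2}$: shifting $(w,W)$ by $(c, c\mathbf{1})$ leaves $W-w$ unchanged on $E$. So I can subtract any $c \in \C$ from both components and estimate the two resulting pieces separately, then take the infimum over $c$.

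More concretely, for an arbitrary representative $(w,W)$ and any $c \in \C$, I would write
\[
\|W-w\|_{H^r(E)} \;\leq\; \|W-c\mathbf{1}\|_{H^r(E)} + \|w-c\|_{H^r(E)}
\]
and bound each term. For the second term, since $r+1/2 > 1/2$ and $\partial\Omega$ is smooth enough (the statement is used in the main theorem's setting), the trace theorem gives
\[
\|w-c\|_{H^r(\partial\Omega)} \;\leq\; C\,\|w-c\|_{H^{r+1/2}(\Omega)},
\]
and restriction from $\partial\Omega$ to the open subset $E$ (which has Lipschitz boundary since each $E_m$ is such a patch and the $E_m$'s are well-separated) is bounded on $H^r$.

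For the piecewise-constant part, the idea is that $W-c\mathbf{1}$ equals the constant $W_m - c$ on each $E_m$. Since the Slobodeckij seminorm vanishes on any constant, $\|W_m - c\|_{H^r(E_m)} \leq C_m |W_m - c|$ with $C_m$ depending only on $|E_m|$ and $r$. The well-separation assumption $\overline{E}_m \cap \overline{E}_l = \emptyset$ implies that the cross Slobodeckij integrals between distinct electrodes have a positive lower bound on $|x-y|$, so those cross-contributions are likewise bounded by the Euclidean norms of the entries of $W-c\mathbf{1}$. Summing, $\|W - c\mathbf{1}\|_{H^r(E)} \leq C \|W - c\mathbf{1}\|_2$.

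Combining these two bounds and taking the infimum over $c \in \C$ yields
\[
\|W-w\|_{H^r(E)} \;\leq\; C \inf_{c \in \C} \Bigl( \|w-c\|_{H^{r+1/2}(\Omega)} + \|W-c\mathbf{1}\|_2 \Bigr) \;\leq\; \sqrt{2}\,C\,\|(w,W)\|_{\mathcal{H}^{r+1/2}},
\]
which is the claim. The only non-routine point is verifying that the piecewise constant function $W$ has $H^r(E)$-norm controlled by its Euclidean norm; this is the place where one actually uses the well-separation of electrodes (to handle both the intrinsic $H^r(E_m)$ norms of constants and any cross terms between distinct $E_m, E_l$ in whichever equivalent definition of $H^r(E)$ one adopts). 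Everything else is a direct application of the trace theorem and the triangle inequality.
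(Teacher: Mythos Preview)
Your argument is correct and follows essentially the same route as the paper: triangle inequality after subtracting an arbitrary constant $c$, trace theorem for the $w$-part, equivalence of norms on the finite-dimensional space of piecewise constant functions for the $W$-part, then infimum over $c$. Your Slobodeckij-seminorm discussion for the piecewise constant term is more detailed than necessary---the paper simply invokes the equivalence of all norms on the finite-dimensional space ${\rm span}\{\chi_1,\dots,\chi_M\}$, which already gives $\|W-c\|_{H^r(E)} \leq C\|W-c\mathbf{1}\|_2$ without any explicit handling of cross terms.
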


\begin{proof}
We may estimate as follows:
\begin{align*}
\| W - w \|_{H^{r}(E)} &\leq \inf_{c \in \C} \Big( \| W - c\|_{H^{r}(E)} + \| c - w \|_{H^{r}(E)} \Big) \\ 
&\leq 2 \inf_{c \in \C} \Big( \| w - c\|_{H^{r}(\partial \Omega)}^2 + \| W - c\|_{H^{r}(E)}^2 \Big)^{1/2} \\[1mm]
&\leq C(r,E, \Omega) \| (w, W) \|_{\mathcal{H}^{r+1/2}},
\end{align*}
where the last step follows from the trace theorem and the equivalence of norms on a finite-dimensional space.
\end{proof}

The following theorem is the main result of this section. Take note that one could also prove a version where the required regularity of $\sigma$ and $\partial \Omega$ depends on the smoothness of $\zeta$,~i.e.~on $s$, but such a generalization would demand extra work without adding anything to the intuitive contents of the result. 
To be more precise, the proof would be analogous, but one would have to keep carefully track of how much regularity is required of $\partial \Omega$ and $\sigma$ to guarantee a certain smoothness for~$u$.

\begin{theorem}
\label{thm:smoothness}
Assume that \eqref{eq:sigma} and \eqref{eq:zeta} hold, $\sigma \in \mathcal{C}^\infty(\overline{\Omega}, \C^{n \times n})$  and  $\partial \Omega$ is of class $\mathcal{C}^\infty$. If furthermore $\zeta \in H^s(\partial \Omega)$ for some $s > (n-1)/2$, then the solution to \eqref{eq:weak} satisfies
\begin{equation}
\label{scont}
\| (u,U) \|_{\mathcal{H}^{s+3/2}} \leq C  \| I \|_2.
\end{equation}
where $C = C(\Omega, E, \sigma, \zeta, s) > 0$ is independent of $I \in \C_\diamond^M$.
\end{theorem}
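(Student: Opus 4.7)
The plan is to run a finite bootstrap driven by elliptic regularity for the smooth-coefficient Neumann problem on a smooth domain. From Theorem~\ref{thm:existence} I start with $(u,U) \in \mathcal{H}^1$ and interpret the first line of \eqref{eq:cemeqs} as
\[
\nabla \cdot (\sigma \nabla u) = 0 \ \text{in}\ \Omega, \qquad \nu \cdot \sigma \nabla u = g \ \text{on}\ \partial \Omega,
\]
with Neumann data $g := \zeta(U-u)$. The compatibility $\int_{\partial\Omega} g \dS = \sum_m I_m = 0$ is immediate from the third line of \eqref{eq:cemeqs} and the assumption $I \in \C_\diamond^M$.

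The preliminary task, and the main obstacle, will be to verify that $\zeta U \in H^s(\partial\Omega)$ even though the piecewise constant function $U$ has jumps across $\partial E$ and by itself belongs only to $H^{1/2-\varepsilon}(\partial\Omega)$. Because $s > (n-1)/2$, the Sobolev embedding gives $\zeta \in \mathcal{C}(\partial\Omega)$, so the second assumption in \eqref{eq:zeta} forces $\zeta|_{\partial E_m} = 0$. I will choose smooth cutoffs $\phi_m \in \mathcal{C}^\infty(\partial\Omega)$ that equal $1$ on $\overline{E}_m$ and have pairwise disjoint supports, so that $\zeta \phi_m$ agrees with $\zeta \chi_m$ on all of $\partial\Omega$. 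Writing $\zeta U = \sum_m U_m \zeta \phi_m$ and applying Lemma~\ref{lemma:multipli} with $\eta = \phi_m$ then puts each summand in $H^s(\partial\Omega)$, with norm controlled by $|U_m| \, \|\zeta\|_{H^s(\partial\Omega)}$.

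With that preparation, the bootstrap proceeds as follows. Suppose $(u,U) \in \mathcal{H}^{r+1/2}$ for some $0 < r \leq s$; Theorem~\ref{thm:existence} furnishes the base case $r = 1/2$. The trace theorem gives $u|_{\partial\Omega} \in H^r(\partial\Omega)$, and Lemma~\ref{lemma:multipli} applied with the multiplier $\zeta$ yields $\zeta u \in H^r(\partial\Omega)$. Combined with the a priori bound $\zeta U \in H^s \subset H^r$, this gives $g \in H^r(\partial\Omega)$, and the standard shift estimate for the Neumann problem (valid because $\sigma$ and $\partial\Omega$ are smooth) upgrades it to $u \in H^{r+3/2}(\Omega)$ modulo constants, i.e.\ $(u,U) \in \mathcal{H}^{r+3/2}$, with a quantitative estimate. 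Each pass raises the $\mathcal{H}$-index by $1$ until $r$ would exceed $s$, at which point the multiplier lemma caps the regularity at $s$ and the iteration terminates with $(u,U) \in \mathcal{H}^{s+3/2}$ after finitely many steps. Chaining the intermediate estimates with Theorem~\ref{thm:existence} produces \eqref{scont}.

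The hard part really is the preliminary step: reconciling the desired global $H^s$ regularity of $\zeta U$ with the intrinsic discontinuity of the piecewise constant $U$. Everything hinges on the continuity of $\zeta$ at the electrode boundaries, which is guaranteed precisely by the hypothesis $s > (n-1)/2$ via the Sobolev embedding and is the analytical manifestation of the ``smoothness'' built into the proposed model. Once this is granted, Lemmas~\ref{lemma:multipli} and \ref{lemma:dummy} reduce everything to a textbook bootstrap.
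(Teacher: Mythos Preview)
Your bootstrap is correct and matches the paper's own argument: the paper packages your explicit cutoff construction $\sum_m U_m\phi_m$ as an abstract bounded extension operator $\mathcal{E}:\operatorname{span}\{\chi_1,\dots,\chi_M\}\to H^s(\partial\Omega)$, fixes the representative with $\tilde u|_{\partial\Omega}$ mean-free (and separately bounds $\|\tilde U\|_2\le C\|I\|_2$), and then runs the identical Neumann shift iteration via the Neumann-to-Dirichlet map $\Lambda_\sigma$. One small point of emphasis: the identity $\zeta\phi_m=\zeta\chi_m$ that you need already follows from the $L^\infty$ support condition in \eqref{eq:zeta} together with the pairwise disjointness of the $\operatorname{supp}\phi_m$ (so that $\operatorname{supp}\phi_m\setminus\overline{E}_m\subset\partial\Omega\setminus\overline{E}$); the pointwise vanishing $\zeta|_{\partial E_m}=0$ you highlight is true but is not what the argument actually hinges on.
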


\begin{proof}
To begin with, consider the Neumann boundary value problem
\begin{equation}
\label{eq:Neumann}
\nabla \cdot (\sigma \nabla v) = 0 \quad {\rm in} \ \Omega, \qquad \nu \cdot \sigma \nabla v = f \quad {\rm on} \ \partial \Omega 
\end{equation}
for an arbitrary $f$ in the mean-free Sobolev space
$$
H^r_\diamond(\partial \Omega) = \{g \in H^r(\partial \Omega) \ | \ \langle g, 1 \rangle_{\partial \Omega} = 0 \}, \qquad r \in \R,
$$
where $\langle \, \cdot \, , \, \cdot\, \rangle_{\partial \Omega}: H^{r}(\partial \Omega) \times H^{-r}(\partial \Omega) \to \C$ denotes the bilinear dual evaluation between Sobolev spaces on $\partial \Omega$.
Due to the infinite smoothness of $\sigma$ and $\partial \Omega$, for any $r \in \R$ the problem \eqref{eq:Neumann} has a unique solution $v \in H^{r+3/2}(\Omega) / \C$ satisfying~\cite{Lions72} 
\begin{equation}
\label{eq:contneum}
\| v \|_{H^{r+3/2}(\Omega) / \C} \leq C \| f \|_{H^r(\partial \Omega)}. 
\end{equation}
In particular, the Neumann-to-Dirichlet map
\begin{equation}
\label{eq:NtoD}
\Lambda_{\sigma}: f \mapsto v|_{\partial \Omega}, \quad H^r_\diamond(\partial \Omega) \to H^{r+1}(\partial \Omega) / \C,
\end{equation}
is bounded for any $r \in \R$ by virtue of trace theorems for those elements of $H^{r+3/2}(\Omega)$ for which the range of the differential operator $\nabla \cdot \sigma \nabla (\, \cdot \, )$ is a subspace of $L^2(\Omega)$~\cite{Lions72}. To complete this introductory part of the proof, let
$$
\mathcal{E}: {\rm span} \{ \chi_1, \dots, \chi_M\} \to H^s(\partial \Omega)
$$
be a linear and bounded extension operator (with respect to any given norm for the finite-dimensional space ${\rm span} \{ \chi_1, \dots, \chi_M\}$),~i.e.,~such that $(\mathcal{E}\eta) |_E = \eta|_E$ for all $\eta \in {\rm span} \{ \chi_1, \dots, \chi_M\}$. 

Let $(u,U) \in \mathcal{H}^1$ be the solution to \eqref{eq:cemeqs} for some $I \in \C_\diamond^M$.
The second equation of \eqref{eq:cemeqs}
immediately leads to the preliminary estimate
\begin{equation}
\label{eq:preli}
\| \nu \cdot \sigma \nabla u \|_{L^2(\partial \Omega)}
 \leq C \| \zeta \|_{H^s(E)} \| U - u \|_{L^2(E)} \leq C \| (u,U) \|_{\mathcal{H}^1}\leq C  \|I \|_2
\end{equation}
by virtue of Lemmas~\ref{lemma:multipli} and \ref{lemma:dummy}, and Theorem~\ref{thm:existence}.

Set $r = \min \{1,s\}$ and denote by $(\tilde{u}, \tilde{U})$ the particular representative of the equivalence class $(u,U) \in \mathcal{H}^1$ for which $\tilde{u}|_{\partial \Omega} \in H^{1/2}_\diamond(\partial \Omega)$.
Since $\zeta$ is supported on $\overline{E}$ by assumption~\eqref{eq:zeta}, the second condition of \eqref{eq:cemeqs} can be rewritten in an alternative form
$$
\nu \cdot \sigma \nabla u = \nu \cdot \sigma \nabla \tilde{u} =  \zeta ( \mathcal{E} \tilde{U}  - \tilde{u}) \qquad {\rm on} \ \partial \Omega.
$$
Because of Lemma~\ref{lemma:multipli}, the vanishing mean of $\tilde{u}$, the boundedness of the operators $\Lambda_\sigma$ and 
$\mathcal{E}$, and \eqref{eq:preli}, we thus have
\begin{align}
\label{eq:normalnorm}
\| \nu \cdot \sigma \nabla u \|_{H^{r}(\partial \Omega)}  
& \leq C \| \mathcal{E} \tilde{U} -  \tilde{u} \|_{H^{r}(\partial \Omega)} 
\leq C \Big( \| \mathcal{E} \tilde{U} \|_{H^{r}(\partial \Omega)} + \| \tilde{u}  \|_{H^{r}(\partial \Omega)/\C} \Big) \nonumber \\[0.5mm]
& \leq C \big( \| \tilde{U} \|_2 +  \| \nu \cdot \sigma \nabla u \|_{H^{r-1}(\partial \Omega)} \big) \leq C \big( \| \tilde{U} \|_2 +  \| I \|_2 \big).
\end{align}
Moreover, since $\zeta \in H^{s}(\partial \Omega) \subset \mathcal{C}(\partial \Omega)$ is nonzero on some set of nonzero measure on each $E_m$, $m=1, \dots, M$, due to~\eqref{eq:zeta}, the second equation of \eqref{eq:cemeqs} leads to
\begin{align*}
\|\tilde{U}\|_2 &\leq C \int_{E} |\zeta \tilde{U}| \, {\rm d} S 
\leq C \Big( \| \zeta \|_{L^\infty(\partial \Omega)} \int_{E} |\tilde{u}| \, {\rm d} S + \int_{E} | \nu \cdot \sigma \nabla u | \, {\rm d} S \Big) \\
&\leq C \big( \| \tilde{u} \|_{L^2(\partial \Omega)} + \|  \nu \cdot \sigma \nabla u \|_{L^2(\partial \Omega)} \Big) \leq C \|I \|_2,
\end{align*}
where $C>0$ does not depend on $I$ (or $\tilde{U}$) and the last step is a consequence of \eqref{eq:preli} and the boundedness of $\Lambda_\sigma: L^2_\diamond(\partial \Omega) \to L^2(\partial \Omega)/\C$. Combining the previous estimate with \eqref{eq:contneum} and \eqref{eq:normalnorm}, results in
\begin{equation}
\label{eq:preliplus}
\| u \|_{H^{r+3/2}(\Omega)/ \C} \leq C \| \nu \cdot \sigma \nabla u \|_{H^{r}(\partial \Omega)} \leq C \|I \|_2.
\end{equation}
In particular, it is straightforward to deduce that
\begin{equation}
\label{eq:end}
\| (u,U) \|_{\mathcal{H}^{r+3/2}} \leq C (\| u \|_{H^{r+3/2}(\Omega)/ \C} + \| (u,U) \|_{\mathcal{H}^{1}}) \leq C \|I\|_2, 
\end{equation}
which proves the claim if $s \leq 1$.

If $s > 1$, one can repeat the above argument with $r = \min\{2,s \}$, using the previous \eqref{eq:preliplus} in place of \eqref{eq:preli} in the new \eqref{eq:normalnorm}, to end up once again with \eqref{eq:end}, which this time around corresponds to \eqref{scont} if $s \leq 2$. The complete assertion follows by inductively reiterating this argument.
\end{proof}

\begin{remark}
\label{remark}
The assumption on the smoothness of the boundary $\partial \Omega$ can be relaxed in many ways. For example, if $n=2$, the domain is a square and the corners are well separated from the electrodes, one can locally straighten the corners by introducing a suitable conformal map ($z \mapsto z^2$). The crucial property is the vanishing Neumann boundary condition that is preserved when the domain is deformed. 
\end{remark}

\section{Shape derivatives}
\label{sec:Frechet}

As an example of a setting where the higher regularity of the smoothened CEM may be useful, we consider the Fr\'echet derivative of electrode measurements with respect to the shape of $\Omega$. Such a derivative is needed in absolute EIT if the body shape is not accurately known (cf.,~e.g.,~medical imaging) and must be reconstructed at the same time as the admittivity to avoid severe artifacts~\cite{Darde13a,Darde13b,Nissinen11a}. For simplicity and to be able to use the results in \cite{Darde12,Darde13a} without further generalizations, we assume throughout this section that $\partial \Omega$ and $\partial E$ are smooth.

The measurement, or current-to-voltage map of the CEM is usually defined via
\begin{equation}
\label{eq:measmat}
R: I \mapsto U, \quad \C^M_\diamond \to \C^M / \C ,
\end{equation} 
where $U$ is the second component of the solution to \eqref{eq:cemeqs} 
and $\C^M / \C = \{ \{ V + c \mathbf{1} \, | \, c \in \C\} \, | \, V \in \C^M \}$.  Due to a certain symmetry of \eqref{eq:weak}, $R$ can be represented as a real-symmetric, complex $(M-1)\times(M-1)$ matrix in terms of any orthonormal basis for $\C^M_\diamond \sim \C^M/\C$. 
However, assuming that the admittivity $\sigma$ is defined in some neighborhood of $\Omega$, $R$ can be interpreted as a function of two variables,
\[
R: (I, h) \mapsto U(I,h), \quad \C^M_\diamond \times \mathcal{B}_d
\to \C^M / \C,
\]
where $\mathcal{B}_d \subset [\mathcal{C}^1(\partial\Omega)]^n$ is an origin-centered open ball of radius $d>0$. Moreover, $(u(I,h),U(I,h))$ is the solution of \eqref{eq:cemeqs} when $\partial \Omega$ and $E_m$, $m=1, \dots, M$, are replaced by the perturbed versions
\begin{equation}
\label{eq:perturbed}
\partial \Omega^h = \{ x + h(x) \, | \, x \in \partial \Omega \}, \qquad
E_m^h = \{ x + h(x) \, | \, x \in E_m \},
\end{equation}
respectively. The surface admittance $\zeta$ is assumed to stretch accordingly,~i.e.,~the electrode contacts on $\partial \Omega^h$ are characterized by the function 
$$
\zeta^h: \partial \Omega^h \ni x+h(x) \mapsto \zeta(x) \in \C.
$$ 
If $d > 0$ is chosen small enough, the above definitions are unambiguous in the sense that $\partial \Omega^h$ defines a $\mathcal{C}^1$ boundary of a bounded domain $\Omega^h$, $\{E^h_m\}_{m=1}^M$ is a proper set of connected, well-separated electrodes and $\zeta^h$ defines an element of $L^\infty(\partial \Omega^h)$ satisfying \eqref{eq:zeta} with respect to the perturbed boundary and electrodes~\cite{Darde13a}. In the following, we will implicitly assume that $d>0$ is chosen small enough in this sense.

For the traditional CEM, the elliptic boundary value problem defining the Fr\'echet derivative of $R: \C^M_\diamond \times \mathcal{B}_d \to \C^M / \C$ with respect to its second variable falls outside the $H^1$-based variational theory; to be more precise, the solution of the shape derivative problem belongs to $\mathcal{H}^{1-\epsilon}$ for any $\epsilon>0$ \cite[Theorem~3.3]{Darde13a}. However, for the smoothened version \eqref{eq:cemeqs}, the problem defining the shape derivative admits a $\mathcal{H}^1$-based variational formulation with the same left-hand side as in \eqref{eq:weak}, if $\zeta$ is regular enough. 

Assume that $\zeta \in H^s(\partial \Omega)$ with some $s > (n-1)/2$ and let $(u,U) \in \mathcal{H}^{s+3/2}$ be the solution of \eqref{eq:cemeqs} for some $I \in \C_\diamond^M$. Consider the problem of finding $(u'[h], U'[h]) \in \mathcal{H}^{1}$ such that
\begin{equation}
\label{eq:derivari}
B\big((u'[h],U'[h]), (v,V)\big) = F[h](v,V) \qquad {\rm for} \ {\rm all} \ (v,V) \in \mathcal{H}^1,
\end{equation}
where the antilinear functional $F[h]: \mathcal{H}^1 \to \C$ is defined by
\begin{align}
\label{eq:aFFa}
F[h](v,V) &= \int_{\partial \Omega} h_\nu \zeta \Big(\frac{\partial u}{\partial \nu} - \kappa (U-u)\Big) (\overline{V} - \overline{v}) \dS \nonumber \\[1mm]
 & \quad + \big\langle h_\tau \cdot {\rm Grad}\, \zeta,  (U-u) (\overline{V} - \overline{v}) \big\rangle_{E} \\[1mm]
&\quad - \big\langle h_\nu(\sigma \nabla u |_{\partial \Omega})_\tau, {\rm Grad} (\overline{v}|_{\partial \Omega}) \big \rangle_{\partial \Omega}. \nonumber
\end{align}
Here, ${\rm Grad}$ denotes the surface gradient \cite{Colton98}, $\kappa: \partial \Omega \to \R$ is the sum of principal curvatures on $\partial \Omega$, and $h_\nu = h \cdot \nu$ and $h_\tau = h - h_\nu \nu$ are the normal (scalar) and tangential (vector) components of $h: \partial \Omega \to \R^{n}$, respectively. Moreover,  $\langle \, \cdot \, , \, \cdot\, \rangle_{E}: H^{-r}(E) \times H^{r}(E) \to \C$,  $|r|<1/2$,  denotes the bilinear dual evaluation between Sobolev spaces on the electrodes (see,~e.g.,~\cite{Lions72}). Finally, note that $F[h]$ depends linearly on $h$, and so the same conclusion also holds for the solution $(u'[h],U'[h]) \in \mathcal{H}^1$, if it uniquely exists.

\begin{lemma}
\label{lemma:derex}
Assume that \eqref{eq:sigma} and \eqref{eq:zeta} hold, $\sigma \in \mathcal{C}^\infty(\overline{\Omega}, \C^{n \times n})$, and $\partial \Omega$ and $\partial E$ are of class $\mathcal{C}^\infty$. If furthermore $\zeta \in H^s(\partial \Omega)$ for some $s > (n-1)/2$, then the variational problem \eqref{eq:derivari} has a unique solution $(u'[h], U'[h]) \in \mathcal{H}^{1}$ for any $h \in [\mathcal{C}^1(\partial \Omega)]^n$ and $I \in \C_\diamond^M$, satisfying
$$
\| (u'[h], U'[h]) \|_{\mathcal{H}^1} \leq C \| I \|_2 \|h\|_{\mathcal{C}^1(\partial \Omega)},
$$
where $C> 0$ is independent of $h$ and $I$.
\end{lemma}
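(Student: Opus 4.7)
The plan is to apply the Lax--Milgram theorem to \eqref{eq:derivari}. Since Lemma~\ref{lemma:coerc} already delivers boundedness and coercivity of the sesquilinear form $B$ on $\mathcal{H}^1 \times \mathcal{H}^1$, unique solvability will follow once I establish that $F[h]: \mathcal{H}^1 \to \C$ is well-defined (i.e., invariant under the choice of representatives), antilinear and bounded by $C \|h\|_{\mathcal{C}^1(\partial\Omega)} \|I\|_2$. The shape-derivative estimate $\|(u'[h],U'[h])\|_{\mathcal{H}^1} \leq C \|I\|_2 \|h\|_{\mathcal{C}^1(\partial\Omega)}$ then follows directly from the Lax--Milgram bound.

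Well-definedness and antilinearity are straightforward. Antilinearity of each of the three terms in $(v,V)$ is immediate. For independence of the representative of $(u,U)$ and $(v,V)$, observe that $u$ and $U$ appear in the first two terms only through the difference $U-u$, which is unaffected by the simultaneous shift $(u,U) \mapsto (u+c, U+c\mathbf{1})$; the same shift invariance holds for $V-v$. In the third term, adding a constant to $v$ does not affect $\mathrm{Grad}(\bar v|_{\partial \Omega})$, and $u$ enters only through $\sigma \nabla u$. Support considerations (the second condition of \eqref{eq:zeta}) localize the second term to $\overline{E}$, so there is no ambiguity there either.

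The bulk of the work is bounding $F[h]$. By Theorem~\ref{thm:smoothness}, $(u,U) \in \mathcal{H}^{s+3/2}$ with $\|(u,U)\|_{\mathcal{H}^{s+3/2}} \leq C\|I\|_2$; hence $\nu \cdot \sigma \nabla u \in H^s(\partial \Omega)$, the tangential trace $(\sigma \nabla u|_{\partial\Omega})_\tau \in H^s(\partial \Omega, \R^n)$, and, using Lemma~\ref{lemma:dummy} and the Sobolev embedding $H^s(\partial\Omega) \hookrightarrow \mathcal{C}(\partial\Omega)$, we have $U-u \in H^{s+1}(E) \hookrightarrow \mathcal{C}(E)$. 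For the test pair, $V-v \in H^{1/2}(E)$ and $v|_{\partial\Omega} \in H^{1/2}(\partial\Omega)$ by the trace theorem and Lemma~\ref{lemma:dummy}. I will estimate the three terms of \eqref{eq:aFFa} as follows:
\begin{itemize}
\item For the first term, $h_\nu \zeta (\partial u/\partial \nu - \kappa(U-u))$ lies in $L^2(\partial \Omega)$ with norm bounded by $C\|h\|_{\mathcal{C}^1}\|\zeta\|_{H^s}\|(u,U)\|_{\mathcal{H}^{s+3/2}}$ via Lemma~\ref{lemma:multipli}, and is paired against $\overline{V}-\overline{v} \in L^2(E)$.
\item For the third term, I use the duality pairing $H^{1/2}(\partial\Omega) \times H^{-1/2}(\partial\Omega)$, estimating $h_\nu (\sigma \nabla u|_{\partial\Omega})_\tau$ in $H^{1/2}(\partial\Omega)$ (again by Lemma~\ref{lemma:multipli}, since $s > (n-1)/2 \geq 1/2$) against $\mathrm{Grad}(\bar v|_{\partial\Omega}) \in H^{-1/2}(\partial\Omega)$.
\item The middle term is the delicate one: $\mathrm{Grad}\,\zeta \in H^{s-1}(\partial\Omega)$, which for $s$ close to $(n-1)/2$ has a negative Sobolev index. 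I pick an exponent $r$ with $|r|<1/2$ and $r \geq 1-s$, so that Lemma~\ref{lemma:multipli} yields $h_\tau \cdot \mathrm{Grad}\,\zeta \in H^{-r}(E)$ (using the $\mathcal{C}^1$ regularity of $h$), while the product $(U-u)(\overline{V}-\overline{v})$ lies in $H^{1/2}(E) \hookrightarrow H^{r}(E)$, again by the multiplication lemma combined with the continuous embedding $H^{s+1}(E) \hookrightarrow \mathcal{C}(E)$.
\end{itemize}

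Summing the three bounds gives $|F[h](v,V)| \leq C\|h\|_{\mathcal{C}^1(\partial\Omega)} \|I\|_2 \|(v,V)\|_{\mathcal{H}^1}$. The hard part is the bookkeeping for the middle term, where one must choose $r$ compatibly with both the (possibly negative) regularity of $\mathrm{Grad}\,\zeta$ and the duality constraint $|r|<1/2$; all other estimates are routine applications of Lemmas~\ref{lemma:multipli}, \ref{lemma:dummy}, the trace theorem, and Theorem~\ref{thm:smoothness}. Invoking Lax--Milgram with Lemma~\ref{lemma:coerc} then concludes the proof.
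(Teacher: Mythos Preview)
Your proposal is correct and follows essentially the same route as the paper: reduce to Lax--Milgram via Lemma~\ref{lemma:coerc}, then bound the three terms of $F[h]$ using Lemmas~\ref{lemma:multipli}, \ref{lemma:dummy}, the trace theorem, and Theorem~\ref{thm:smoothness}. Your parameter $r$ in the middle term plays the role of the paper's $1/2-\delta$ with $\delta=\min\{1/2,(s-1/2)/2\}$, and your slightly stronger product estimate $(U-u)(\overline{V}-\overline{v})\in H^{1/2}(E)$ (versus the paper's $H^{1/2-\delta}(E)$) is harmless since you immediately embed into $H^r(E)$ with $r<1/2$ for the dual pairing.
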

\begin{proof}
By virtue of Lemma~\ref{lemma:coerc} and the Lax--Milgram theorem, it is enough to prove that 
$$
| F[h] (v,V)| \leq C \| I \|_2  \|h\|_{\mathcal{C}^1(\partial \Omega)} \|(v,V) \|_{\mathcal{H}^1}
$$
for any $h \in [\mathcal{C}^1(\partial \Omega)]^n$ and $I \in \C_\diamond^M$. By resorting to the triangle inequality, we may handle the terms on the right-hand side of \eqref{eq:aFFa} one by one.

According to the Cauchy--Schwarz inequality and Lemma~\ref{lemma:dummy},
\begin{align*}
\Big| \int_{\partial \Omega} h_\nu \zeta \Big(\frac{\partial u}{\partial \nu} - \kappa (U-u)\Big) (\overline{V} - \overline{v}) \dS  \Big| 
\leq 
\Big\| h_\nu \zeta \Big(\frac{\partial u}{\partial \nu} - \kappa (U-u)\Big) \Big \|_{L^2(E)} \| (V,v) \|_{\mathcal{H}^1}.
\end{align*}
Applying the triangle inequality and Lemma~\ref{lemma:multipli} (with $\eta = \zeta|_E \in H^s(E)$, $\eta = h_\nu|_E \in \mathcal{C}^1(\overline{E}) \hookrightarrow H^1(E)$ and $\eta = \kappa|_E \in \mathcal{C}^{\infty}(\overline{E})$) to the first factor on the right-hand side  gives
\begin{align*}
\Big\| h_\nu \zeta \Big(\frac{\partial u}{\partial \nu} - \kappa (U-u)\Big) \Big \|_{L^2(E)} & \leq C \|h\|_{\mathcal{C}^1(\partial \Omega)} \Big( \Big\| \frac{\partial u}{\partial \nu} \Big\|_{L^2(E)} 
+ \| U - u \|_{L^{2}(E)} \Big)  \\[1mm]
& \leq C \|h\|_{\mathcal{C}^1(\partial \Omega)} \| (u, U) \|_{ \mathcal{H}^{2}} \leq C \|I\|_2 \|h\|_{\mathcal{C}^1(\partial \Omega)} ,
\end{align*}
where the last two steps follow from the trace theorem, Lemma~\ref{lemma:dummy} and Theorem~\ref{thm:smoothness}. This takes care of the first term on the right-hand side of \eqref{eq:aFFa}.

Set $\delta = \min\{1/2, (s - 1/2)/2\}$, so that $0 < \delta \leq 1/2$ and $1/2 + \delta < s$. 
Lemmas~\ref{lemma:multipli} and \ref{lemma:dummy} yield
\begin{align*}
\big\| (U-u)(\overline{V} - \overline{v}) \big\|_{H^{1/2-\delta}(E)} & \leq 
C  \| U-u\|_{H^{s}(E)} \| V-v\|_{H^{1/2-\delta}(E)} \\[1mm]
&\leq C \| (u,U)\|_{\mathcal{H}^{s+1/2}}\| (v,V)\|_{\mathcal{H}^{1}} \leq C \|I\|_2 \| (v,V)\|_{\mathcal{H}^{1}},
\end{align*}
where the last step corresponds to a weaker version of Theorem~\ref{thm:smoothness}.
Moreover, by the continuity of ${\rm Grad}: H^s(E) \supset H^{1/2+\delta}(E) \to [H^{-1/2+\delta}(E)]^{n}$ (cf.~\cite[p.~85, Prop.~12.1]{Lions72}) and Lemma~\ref{lemma:multipli} with $\eta = [h_\tau|_E]_j \in \mathcal{C}^1(\overline{E}) \hookrightarrow H^1(E)$, $j=1,\dots, n-1$, 
$$
\| h_\tau \cdot {\rm Grad}\, \zeta \|_{H^{-1/2 + \delta}(E)}
\leq C  \| \zeta \|_{H^{s}(E)} \| h \|_{\mathcal{C}^1(\partial \Omega)} \leq C  \| h \|_{\mathcal{C}^1(\partial \Omega)}.
$$
In consequence,
\begin{align*}
\big| \big\langle h_\tau \cdot {\rm Grad}\, \zeta &,  (U-u) (\overline{V} - \overline{v}) \big\rangle_{E} \big| \leq C \|I\|_2  \| h \|_{\mathcal{C}^1(\partial \Omega)}\| (v,V)\|_{\mathcal{H}^{1}}, 
\end{align*}
which handles the second term on the right-hand side of \eqref{eq:aFFa}.

Finally, by using the continuity of ${\rm Grad}: H^{1/2}(\partial \Omega)/\C \to [H^{-1/2}(\partial \Omega)]^{n}$, Lemma \ref{lemma:multipli} with $\eta = h_\nu \in \mathcal{C}^1(\partial \Omega) \hookrightarrow H^1(\partial \Omega)$ and the trace theorem,
\begin{align*}
\big | \big \langle h_\nu(\sigma \nabla u |_{\partial \Omega})_\tau, {\rm Grad} (\overline{v}|_{\partial \Omega}) \big \rangle_{\partial \Omega} \big|
&\leq C \big \|h_\nu(\sigma \nabla u |_{\partial \Omega})_\tau \big\|_{H^{1/2}(\partial \Omega)} \| v \|_{H^{1/2}(\partial \Omega)/\C} \\[1mm]
& \leq C \| h \|_{\mathcal{C}^1(\partial \Omega)} \| u \|_{H^2(\Omega)/\C} \| v \|_{H^{1}(\Omega)/\C} \\[1mm]
& \leq C \|I\|_2  \| h \|_{\mathcal{C}^1(\partial \Omega)}  \| (v,V) \|_{\mathcal{H}^{1}},
\end{align*}
where the final step follows from Theorem~\ref{thm:smoothness}. This completes the proof.
\end{proof}

As hinted above, the problem \eqref{eq:derivari} defines the Fr\'echet derivative of the measurement map $R$ with respect to its second variable, that is, with respect to the shape of $\Omega$.

\begin{theorem}
\label{thm:reunaderivaatta}
Assume that \eqref{eq:sigma} and \eqref{eq:zeta} hold, $\sigma \in \mathcal{C}^\infty(\overline{\Omega}, \C^{n \times n})$, and $\partial \Omega$ and $\partial E$ are of class $\mathcal{C}^\infty$. If furthermore $\zeta \in H^s(\partial \Omega)$ for some $s > (n-1)/2$, then $R:  \C^M_\diamond \times \mathcal{B}_d \to \C^M / \C$ is Fr\'echet differentiable with respect to its second variable at the origin, with the Fr\'echet derivative given by the linear and bounded map
$$
[C^1(\partial\Omega)]^n \ni h \mapsto U'[h] \in \C^M / \C,
$$
where $U'[h]$ is the second component of the solution to \eqref{eq:derivari}.
\end{theorem}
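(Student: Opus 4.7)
The plan is to adapt the shape-calculus argument of \cite[\S 3]{Darde13a} to the smoothened CEM, taking advantage of the additional regularity provided by Theorem~\ref{thm:smoothness} to bypass the more delicate duality estimates that are needed for the traditional CEM. First I would fix a bounded linear extension operator $P\colon [\mathcal{C}^1(\partial\Omega)]^n \to [\mathcal{C}^1(\overline{\Omega})]^n$, set $\tilde{h} = Ph$, and note that for $h\in \mathcal{B}_d$ with $d>0$ small, $F_h := \mathrm{id} + \tilde{h}$ is a $\mathcal{C}^1$-diffeomorphism $\overline{\Omega} \to \overline{\Omega^h}$ carrying each $E_m$ onto $E_m^h$. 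Pulling the perturbed solution back to $\Omega$ via $\hat{u}^h := u^h \circ F_h$ converts the variational formulation on $\Omega^h$ into
\begin{equation*}
B_h\big((\hat{u}^h, U^h),(v,V)\big) = I\cdot\overline{V}, \qquad (v,V)\in\mathcal{H}^1,
\end{equation*}
posed on the \emph{fixed} space $\mathcal{H}^1$, with transformed coefficients $\sigma_h := \det(DF_h)\,(DF_h)^{-1}(\sigma\circ F_h)(DF_h)^{-T}$ and $\zeta_h := \zeta\, J_h$, where $J_h$ is the surface Jacobian of $F_h|_{\partial\Omega}$.

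Subtracting the unperturbed equation $B_0((u,U),(v,V))=I\cdot\overline{V}$ yields
\begin{equation*}
B_0\big((\hat{u}^h - u,\, U^h - U),(v,V)\big) = (B_0 - B_h)\big((\hat{u}^h, U^h),(v,V)\big).
\end{equation*}
Next I would Taylor-expand $\sigma_h$ and $J_h$ around $\tilde{h}=0$ using the explicit formulas for the determinant, the inverse matrix, and the surface Jacobian (cf.~\cite[Lemma~3.2]{Darde13a}), rewriting the right-hand side as $\widetilde{F}[h](v,V) + r_h(v,V)$ with $\widetilde{F}[h]$ linear and bounded on $\mathcal{H}^1$ and $\|r_h\|_{(\mathcal{H}^1)^*}=o(\|h\|_{\mathcal{C}^1(\partial\Omega)})\|I\|_2$. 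The remainder estimate relies on the uniform bound $\|(\hat{u}^h,U^h)\|_{\mathcal{H}^1}\leq C\|I\|_2$ (coercivity of $B_h$ is uniform in $h$ for small $h$ by a direct perturbation of Lemma~\ref{lemma:coerc}) and the continuity $\|(\hat{u}^h,U^h)-(u,U)\|_{\mathcal{H}^1}\to 0$. Coercivity of $B_0$ then produces
\begin{equation*}
\|(\hat{u}^h - u,\, U^h - U) - (\dot{u},\dot{U})\|_{\mathcal{H}^1} = o(\|h\|_{\mathcal{C}^1(\partial\Omega)})\|I\|_2,
\end{equation*}
where $(\dot{u},\dot{U})\in\mathcal{H}^1$ is the unique solution of $B_0((\dot{u},\dot{U}),\cdot)=\widetilde{F}[h]$.

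The final and most delicate step is to translate this material-derivative identity into the shape-derivative equation \eqref{eq:derivari}. Since the electrode potentials carry no spatial dependence, the material and shape derivatives coincide, $\dot{U}=U'[h]$, while the classical Hadamard relation inside $\Omega$ reads $u'[h]=\dot{u}-\tilde{h}\cdot\nabla u$. Substituting this in and integrating by parts, using $\nabla\cdot(\sigma\nabla u)=0$ in $\Omega$, the boundary condition $\nu\cdot\sigma\nabla u=\zeta(U-u)$ on $\partial\Omega$, and the surface divergence identity $\mathrm{Div}_{\partial\Omega}(\tilde{h}|_{\partial\Omega})=\mathrm{Div}(h_\tau)+\kappa h_\nu$ reorganizes everything into exactly the three terms appearing in \eqref{eq:aFFa}; the curvature term and the term $h_\tau\cdot\mathrm{Grad}\,\zeta$ stem from differentiating the surface Jacobian together with the identity $\zeta_h=\zeta J_h$, while the tangential-current term comes from the interior integration by parts applied to the first-order expansion of $\sigma_h$. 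All traces that appear are well-defined because Theorem~\ref{thm:smoothness} gives $u\in H^{s+3/2}(\Omega)\hookrightarrow H^2(\Omega)$ and $\zeta\in H^s(\partial\Omega)$ with $s>(n-1)/2$. Hence $(u'[h],U'[h])$ is the unique solution of \eqref{eq:derivari} provided by Lemma~\ref{lemma:derex}, and extracting the second component of the above $\mathcal{H}^1$-estimate gives the Fréchet-differentiability bound $\|U^h-U-U'[h]\|_{\C^M/\C}=o(\|h\|_{\mathcal{C}^1(\partial\Omega)})\|I\|_2$. Linearity and boundedness of $h\mapsto U'[h]$ are already contained in Lemma~\ref{lemma:derex}.

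The main obstacle is the reorganization in the last step: even though no duality arguments on fractional Sobolev spaces over $\partial\Omega$ are required here (unlike in the traditional-CEM version of \cite{Darde13a}, because $u$ now has two full $L^2$-derivatives), one still has to keep careful track of how the first-order variations of $\det(DF_h)$, $(DF_h)^{-1}$, $\sigma\circ F_h$ and $J_h$ combine with the Hadamard substitution $\dot{u}=u'[h]+\tilde{h}\cdot\nabla u$ to produce precisely the boundary integrand in \eqref{eq:aFFa} and nothing else.
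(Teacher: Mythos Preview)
Your proposal is correct and follows exactly the approach the paper intends: its own proof is a single sentence deferring to the shape-calculus arguments of \cite{Darde12,Darde13a}, remarking only that the improved regularity $(u,U)\in\mathcal{H}^2$ furnished by Theorem~\ref{thm:smoothness} allows one to simplify those proofs. You have essentially written out the skeleton of that argument---pullback via a $\mathcal{C}^1$-diffeomorphism, Taylor expansion of the transported bilinear form, remainder control via uniform coercivity, and the Hadamard passage from material to shape derivative---which is precisely the content of the cited references, with the extra regularity removing the need for the fractional-Sobolev duality estimates used there.
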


\begin{proof}
The assertion follows by slightly modifying (or simplifying) the proofs in \cite{Darde12, Darde13a} taking into account that now $(u,U) \in \mathcal{H}^2$ unlike in \cite{Darde12, Darde13a}.
\end{proof}

If $I, \tilde{I}\in\C^M_\diamond $ are electrode current patterns and $(u,U), (\tilde{u},\tilde{U})$ are the respective solutions of \eqref{eq:cemeqs}, then $U'[h]$ can be assembled using the relation
\begin{align}
\label{eq:sampling1}
U'[h]\cdot \tilde{I} &= \int_{\partial \Omega}h_\nu \zeta \Big( \frac{\partial u}{\partial\nu} - \kappa (U-u) \Big)(\tilde{U}-\tilde{u}) \, {\rm d}S \\[1mm]
\label{eq:sampling2}
& \quad + \int_{\partial \Omega} (h_\tau \cdot {\rm Grad} \, \zeta) (U-u)
(\tilde{U}-\tilde{u}) \,{\rm d}S \\[1mm]
\label{eq:sampling3}
& \quad -\int_{{\partial\Omega}} h_\nu\,(\sigma\nabla u)_\tau \cdot
(\nabla\tilde{u})_\tau \, {\rm d}S,
\end{align}
where it is assumed that $\zeta \in H^{\max\{1,s\}}(\partial \Omega)$ for some $s > (n-1)/2$.
This formula can be straightforwardly deduced from the variational problems \eqref{eq:weak} and \eqref{eq:derivari}: Use the complex conjugate of $(u'[h],U'[h])$ as the test element in \eqref{eq:weak} with $I = \tilde{I}$, then interpret the complex conjugate of $(\tilde{u},\tilde{U})$ as the test element in \eqref{eq:derivari}, and finally employ the higher smoothness of the pairs  $(u,U)$ and $(\tilde{u},\tilde{U})$ guaranteed by Theorem~\ref{thm:smoothness} to interpret the dual evaluations in \eqref{eq:aFFa} as regular integrals (cf.~\cite{Darde12,Darde13a}).
If $\sigma$ is scalar-valued (i.e.,~isotropic), the integral \eqref{eq:sampling1} can be rewritten as
\begin{equation*}
\int_{\partial \Omega} h_\nu \zeta \Big( \frac{\zeta}{\sigma} - \kappa \Big) (U-u) (\tilde{U}-\tilde{u}) \, {\rm d}S,
\end{equation*}
which reveals a symmetry between $(u,U)$ and $(\tilde{u},\tilde{U})$, 
that is, the roles of $(u,U)$ and $(\tilde{u},\tilde{U})$ can be reversed on the right-hand side of \eqref{eq:sampling1}--\eqref{eq:sampling3} without altering its value.

To complete this section, take note that the sampling formula \eqref{eq:sampling1}--\eqref{eq:sampling3} also holds if $\zeta$ is less regular, in which case \eqref{eq:sampling2} just needs to be interpreted as an appropriate dual evaluation. In particular, if the second equation of \eqref{eq:cemeqs} is replaced by the conditions \eqref{eq:alternate} of the traditional CEM, the term $h_\tau \cdot {\rm Grad} \, \zeta$ becomes a weighted delta distribution supported on $\partial E$ and \eqref{eq:sampling2} turns into an integral over $\partial E$~\cite[Corollary~3.4]{Darde13a}.

\section{Numerical experiments}
\label{sec:numerics}

The numerical implementation of the proposed smoothened CEM for EIT is a direct generalization of the traditional CEM implementation presented in, e.g.,~\cite{Kaipio00}. Merely, the nonhomogeneous admittance $\zeta$ has to be incorporated into the associated integrals. As discussed above,
the same also holds true for the numerical shape derivatives, with the exception of \eqref{eq:sampling2} that reduces to an integral over the electrode boundaries in the traditional CEM. 

In this section, we restrict our attention to isotropic, real-valued material parameters and currents. In particular, $\sigma$ denotes the electrical conductivity and $\zeta$ is the contact conductance. 
This is the most commonly considered setting in practical EIT: For anisotropic conductivities, the inverse problem of EIT suffers from an inherent nonuniqueness \cite{Sylvester90} and, on the other hand, the temporal frequencies employed by EIT devices are often so low that one can ignore the capacitive effects, which leads to a real-valued $\sigma$ (see \cite{Vauhkonen97} and references therein). Be that as it may, some of the conclusions drawn in what follows may not apply as such to anisotropic or/and complex conductivities. 
We only work in two dimensions for the sake of computational simplicity. 

Our numerical experiments are divided into four parts: 
First, we investigate the discrepancy between electrode measurements predicted by the traditional CEM and by a particular version of the smoothened CEM (see Figure~\ref{fig:setup}). Next, we study the convergence of FEM for the two forward models as well as for the shape derivative integrals \eqref{eq:sampling1}--\eqref{eq:sampling3}. The third test compares predictions of the two models to experimental EIT data from a homogeneous water tank. 
Finally, using experimental data corresponding to an insulating inclusion in the aforementioned water tank, we demonstrate that the quality of a reconstruction produced by a Bayesian algorithm, which aims at finding a {\em maximum a posteriori} (MAP) estimate for the conductivity and the contact conductance parameters, is independent of the choice between the two models.

\subsection{Model differences}
\label{sec:modelnum}

In order to study the properties of the smoothened CEM, we set up a test geometry on the unit square $\Omega=(0,1)^2$ with $M=8$ electrodes as shown on the left in Figure~\ref{fig:setup}. 
A square is our choice of test domain because it allows an exact representation as a union of triangles, and thus the discretization of the domain boundary does not induce any extra error in the considered FEM solutions; cf.~Remark~\ref{remark}.
In what follows, the numerical solutions for the elliptic problem \eqref{eq:weak} are computed by FEM on uniform meshes similar to the one shown in Figure~\ref{fig:setup}.
The number of nodes ranges from $9^2$ to $2049^2$ and in each case the electrodes cover an even number of element edges.
Unless otherwise stated, the discretization is piecewise linear so that the number of nodes equals the number of degrees of freedom.
The width of a boundary edge element is denoted by $h>0$.

\begin{figure}
\center{
{\includegraphics[height=1.8in]{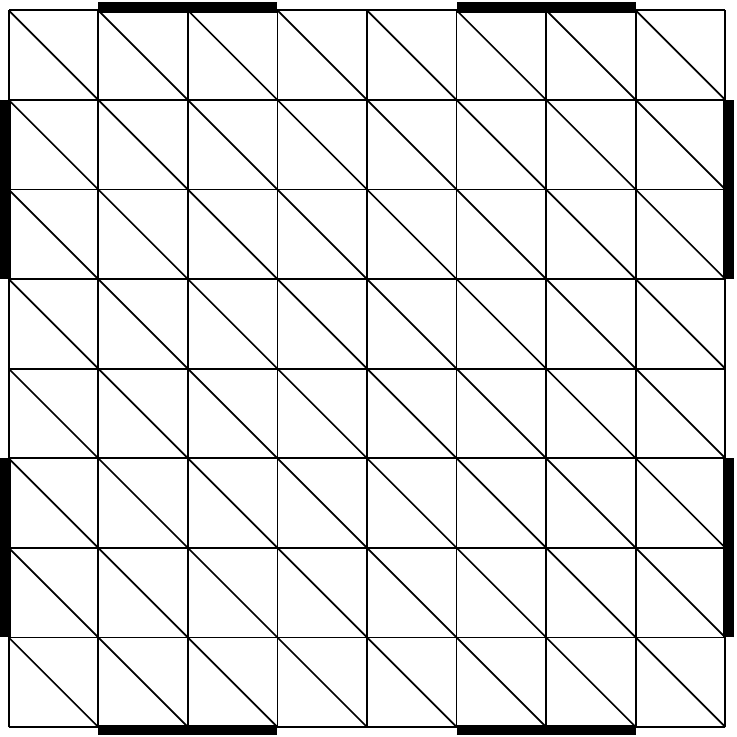}}
\qquad
{\includegraphics[height=1.8in]{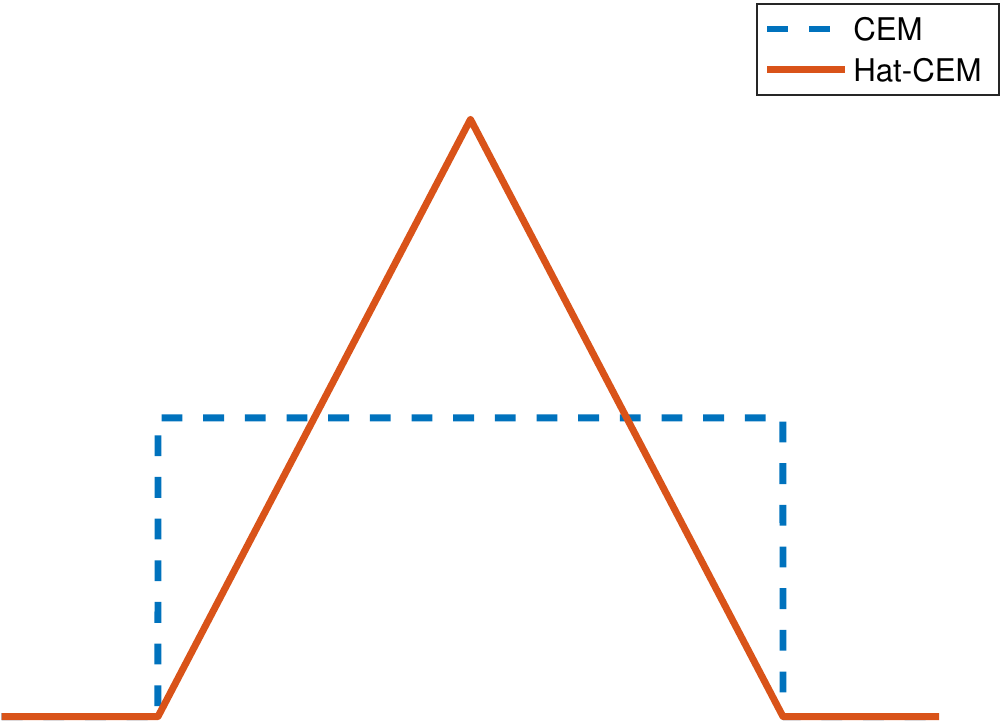}}\\
}
\caption{Left:\ Discretized unit square and eight electrodes depicted by thick line segments. Right:\ Two conductance functions $\zeta$ at the proximity of one electrode. The dashed line corresponds to the traditional CEM and the solid line is a piecewise linear ``hat'' function with the same average conductance.}
\label{fig:setup}
\end{figure}

Recall that the standard CEM is recovered from our setting by defining $\zeta$ to be constant on each electrode and zero elsewhere, cf.\ \eqref{eq:alternate}.
Let us replace these box-shaped functions by hat functions with an equal area under their graphs as depicted on the right in Figure~\ref{fig:setup}.
Such a conductance function satisfies $\zeta \in H^{3/2-\epsilon}(\Gamma)$ for any $\epsilon>0$ and $\Gamma \subset \partial \Omega$ that does not contain any of the four corners. According to Theorem~\ref{thm:smoothness} and Remark~\ref{remark}, the corresponding solution $(u,U)$ thus belongs to $\mathcal{H}^{3-\epsilon}$ provided that the conductivity $\sigma$ is smooth enough.

Before considering
convergence of FEM or real measurements, 
a natural question to ask is how the electrode potentials produced by the new model differ from the standard CEM when both forward problems are solved numerically.
To this end, we choose a finite element mesh with $1025^2$ nodes and define the relative difference
\begin{equation}
\label{eq:du}
d_U(\sigma,\zeta_\text{el}) := \left( \sum_{m=1}^{M-1} \left\lVert U^{(m)} - U^{(m)}_\text{CEM} \right\rVert_2^2 \right)^{1/2} \Bigg/
       \left( \sum_{m=1}^{M-1} \left\lVert U^{(m)}_\text{CEM} \right\rVert_2^2 \right)^{1/2},
\end{equation}
where $U^{(m)}$ and $U^{(m)}_\text{CEM}$ denote the electrode potential vectors for the hat model and the standard model, respectively, corresponding to a common current pattern $I^{(m)} = \mathrm{e}_M - \mathrm{e}_m \in \R^M_\diamond$. Here, $\mathrm{e}_m$ denotes the $m$th Cartesian basis vector of $\R^M$.
The unit or the magnitude of the current are not relevant since they cancel out due to linearity.
It is assumed that the conductivity $\sigma>0$ is constant and that the contact conductance, characterized by the height of the box function (or the half-height of the hat function), is the same for each electrode.
We denote this conductance value by $\zeta_\text{el} > 0$.
Now the relative difference \eqref{eq:du} actually depends only on the ratio $\sigma/\zeta_\text{el} \in \R$, as can be easily deduced from \eqref{eq:cemeqs}.
The quantity $\sigma/\zeta_\text{el}$ has the unit of length (e.g., meters).

It turns out that the relative difference $d_U$ is around $9 \%$ for some values of $\sigma/\zeta_\text{el}$, but it clearly diminishes if the conductance value is either very high or very low.
This behavior is illustrated in the top left image of Figure~\ref{fig:potdiff}.
The next step is to study whether the discrepancy between the models can be kept smaller if the hat functions are scaled differently than in Figure~\ref{fig:setup}.
Given a ratio $\sigma/\zeta_\text{el}$ for the standard CEM, we thus try to find $\zeta_\text{el}'>0$, the half-height of $\zeta$ in our smoothened model, that minimizes the corresponding difference $d_U'(\sigma, \zeta_\text{el}, \zeta_\text{el}')$, which is computed as \eqref{eq:du} but now with this new scaling for the hat function.
For given values of $\sigma$ and $\zeta_\text{el}$, finding $\zeta_\text{el}'$ is a well-defined one-dimensional minimization problem.
The plot at the bottom in Figure~\ref{fig:potdiff} presents the dependence between $\zeta_\text{el}$ and $\zeta_\text{el}'$, whereas the top right image shows the smallest relative difference $d_U'$ as a function of $\sigma/\zeta_\text{el}$.
The difference attains its maximum value of about $5.8\cdot10^{-3}$ at $\sigma/\zeta_\text{el} \approx 50 \cdot 10^{-3} \,\mathrm{m}$ and clearly decreases when the ratio is changed to either direction. 
Some numerical instabilities occur when the ratio gets very low, that is, when the setting approaches the shunt model~\cite{Darde16}. 

\begin{figure}
\center{
{\includegraphics[scale=.5]{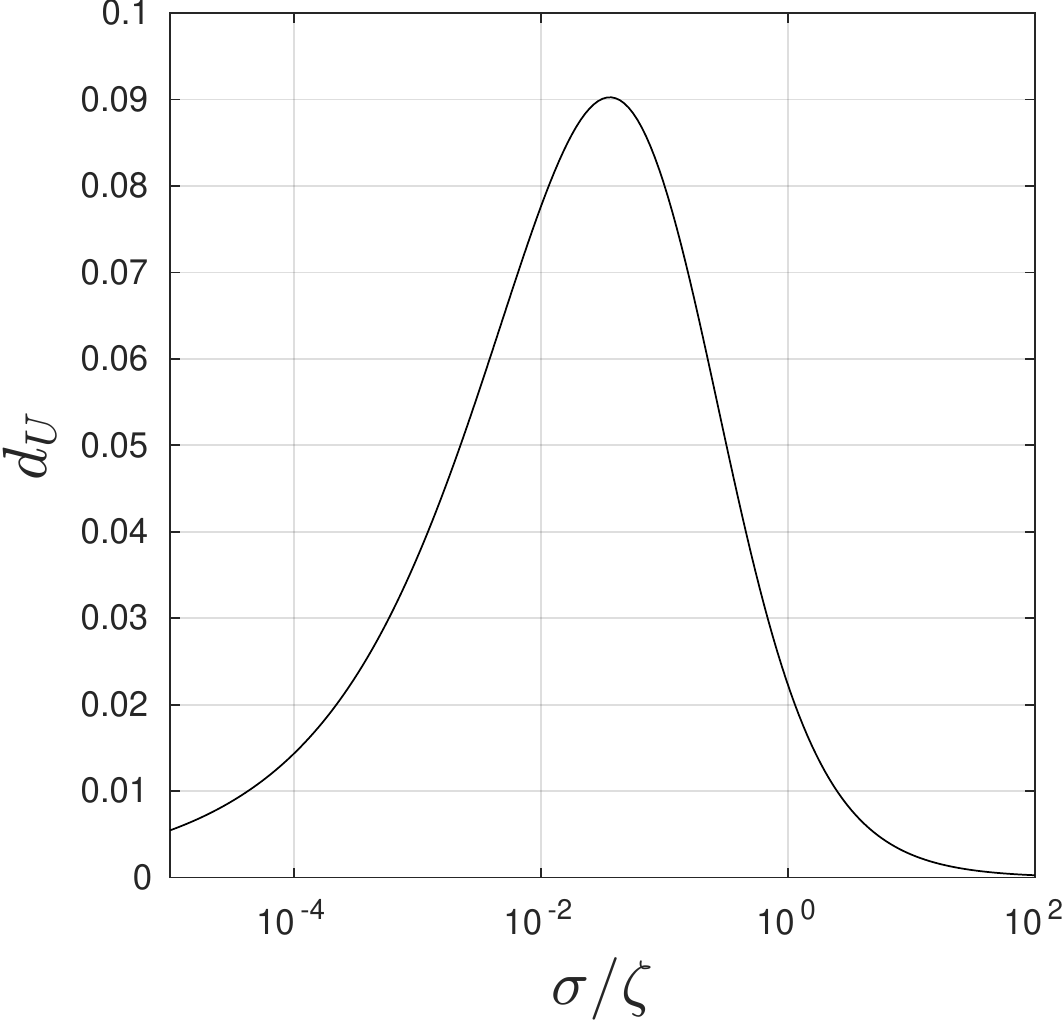}}
\qquad
{\includegraphics[scale=.5]{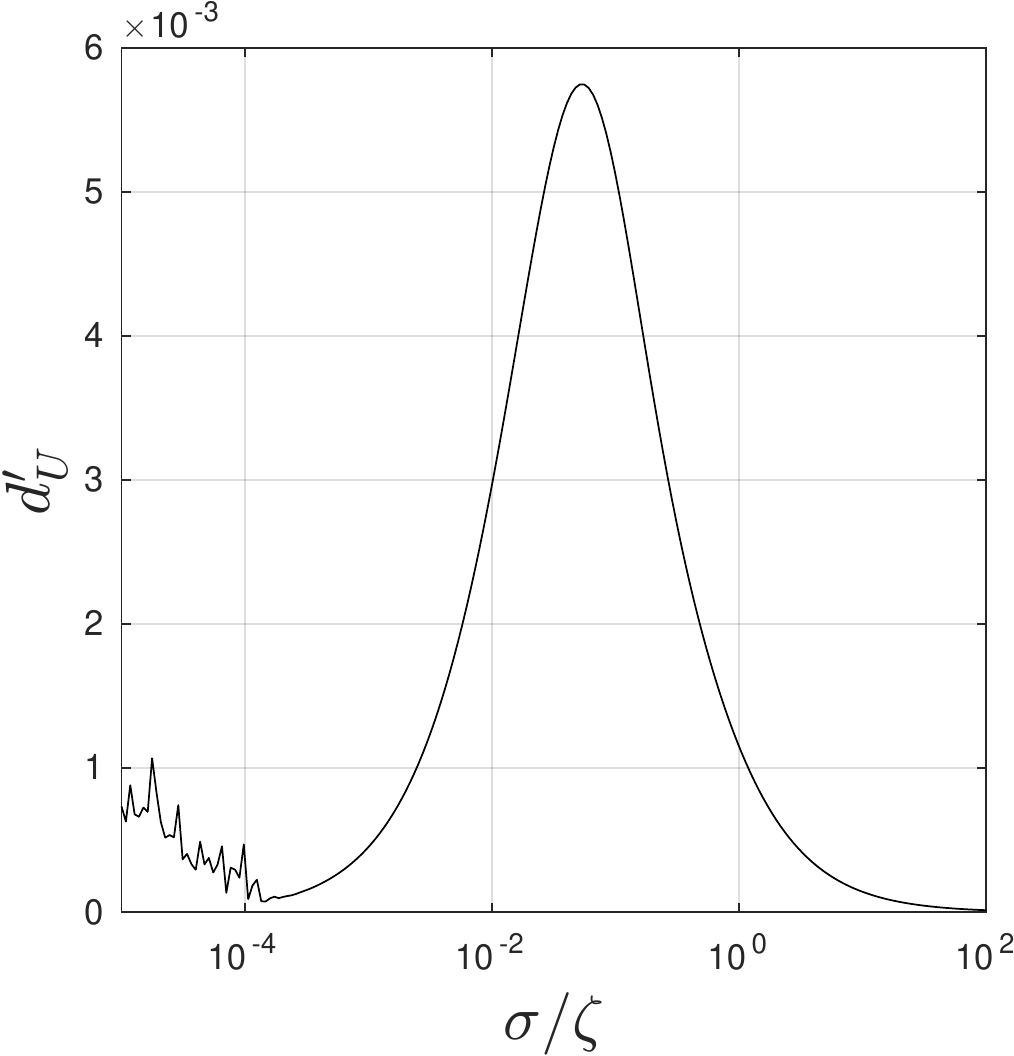}}
}
\includegraphics[scale=.5]{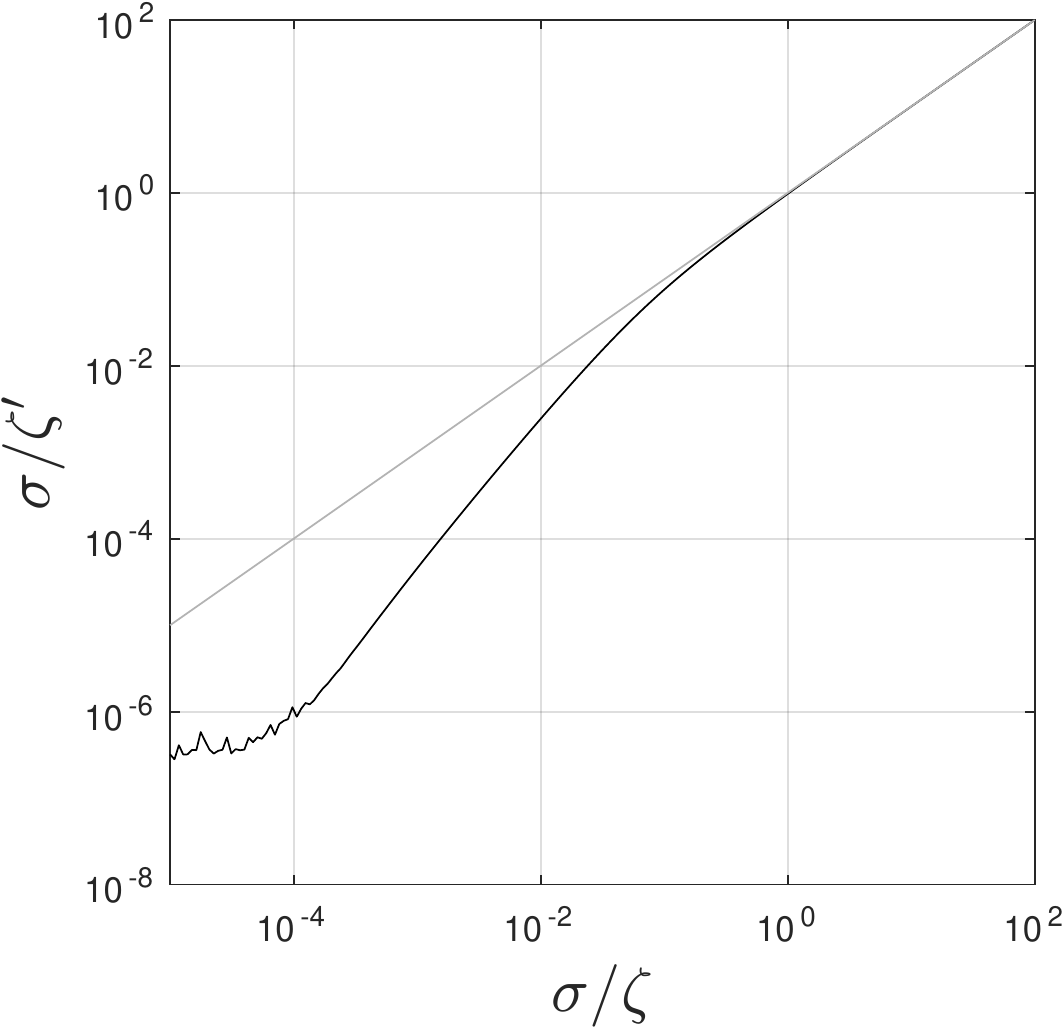}
\caption{Top left:\ Difference of the standard CEM and the proposed model with default scaling. Top right:\ Smallest possible difference obtained by optimized conductance scaling. Bottom:\ Dependence of the optimally scaled hat-conductance parameter $\zeta_\text{el}'$ on the standard CEM conductance $\zeta_\text{el}$. The line $\zeta_\text{el}' = \zeta_\text{el}$ is shown for comparison.}
\label{fig:potdiff}
\end{figure}

When dealing with real-world measurements, the scaling of hat functions is most likely not an issue since the values of the contact conductances are usually not interesting \emph{per se}, but they are merely estimated in order to obtain a more accurate reconstruction for the conductivity.
Moreover, there is no reason to expect that the standard CEM with piecewise continuous conductances would be a totally accurate measurement model either, which is why the quantity \eqref{eq:du} is not called an ``error'' but a ``difference''.

In \cite{Cheng89}, the ratio $\sigma/\zeta_\text{el} \approx 2.4\cdot10^{-3} \,\mathrm{m}$ was reported in several saline experiments.
Regarding EIT with tap water environment, the prior means used in \cite{Darde13b} correspond to $1.3 \cdot 10^{-3} \,\mathrm{m}$, whereas in \cite{Hyvonen17} the initial guess for the conductivity and the contact conductance results in $\sigma/\zeta_\text{el} \approx 0.7 \cdot 10^{-3} \,\mathrm{m}$.
In \cite{Hyvonen15}, the permissible ratio varies between $1.1 \cdot 10^{-5} \,\mathrm{m}$ and $1.1 \cdot 10^{-3} \,\mathrm{m}$.
All these experiments were performed with cylindrically symmetric water tanks with circumferences of about $1 \,\mathrm{m}$, i.e., roughly one fourth of the circumference of our unit square.
Thus, the corresponding points in Figure~\ref{fig:potdiff} can be found by multiplying these ratios by 4, assuming that our observations can be generalized to cases where the geometry and the number of electrodes differ from our test setup.
In any case, it seems that the lowest reported values for $\sigma/\zeta_\text{el}$ are well below the peak in the top right plot of Figure~\ref{fig:potdiff}, actually approaching the shunt model. 
However, one cannot exclude the possibility of encountering conductivity-conductance ratios that correspond to the largest mismatch between the two models because the contact conductances can vary significantly, e.g., in EIT imaging of concrete~\cite{Karhunen10}. 
On the other hand, typical measurement noise levels are clearly nonnegligible compared to any value on the modeling difference graph in the top right image of Figure~\ref{fig:potdiff}; cf.,~e.g.,~\cite{Kourunen09} where the performance of  the EIT unit used in Sections~\ref{sec:reco0} and \ref{sec:reco} is analyzed.

\subsection{Convergence of FEM}
\label{sec:conv}

Let us continue using the test setup of Figure~\ref{fig:setup} for both the standard CEM and for the hat function conductance model.
We study the convergence of finite element solutions toward ``exact'' reference solutions that are computed for both electrode models by using the finest mesh with $2049^2$ nodes.
In addition to the piecewise linear FEM basis functions, the quadratic Lagrange elements are used for comparison; 
for general information on properties and advantages of FEMs of different types and order, we refer to the textbook \cite{Larson13} and the references therein. 
Two pairs of conductivities and contact conductances are considered:\ the first one corresponds to $\sigma / \zeta_\text{el} \approx 50 \cdot 10^{-3} \,\mathrm{m}$, i.e., the peak in the top right plot in Figure~\ref{fig:potdiff}, whereas the second pair 
results in $\sigma / \zeta_\text{el} \approx 4 \cdot 10^{-3} \,\mathrm{m}$, which roughly corresponds to the values mentioned in~\cite{Darde13b}.
The conductance half-heights $\zeta_\text{el}'$ for the smoothened model are computed as in the bottom plot of Figure~\ref{fig:potdiff} and they are $\sigma/\zeta_\text{el}' \approx 30 \cdot 10^{-3} \,\mathrm{m}$ and $\sigma/\zeta_\text{el}' \approx 0.5 \cdot 10^{-3} \,\mathrm{m}$, respectively.
The relative errors are computed in the same way as in~\eqref{eq:du}, but now separately for each conductance model and comparing against the respective reference solution.

\begin{figure}
\center{
{\includegraphics[scale=.4]{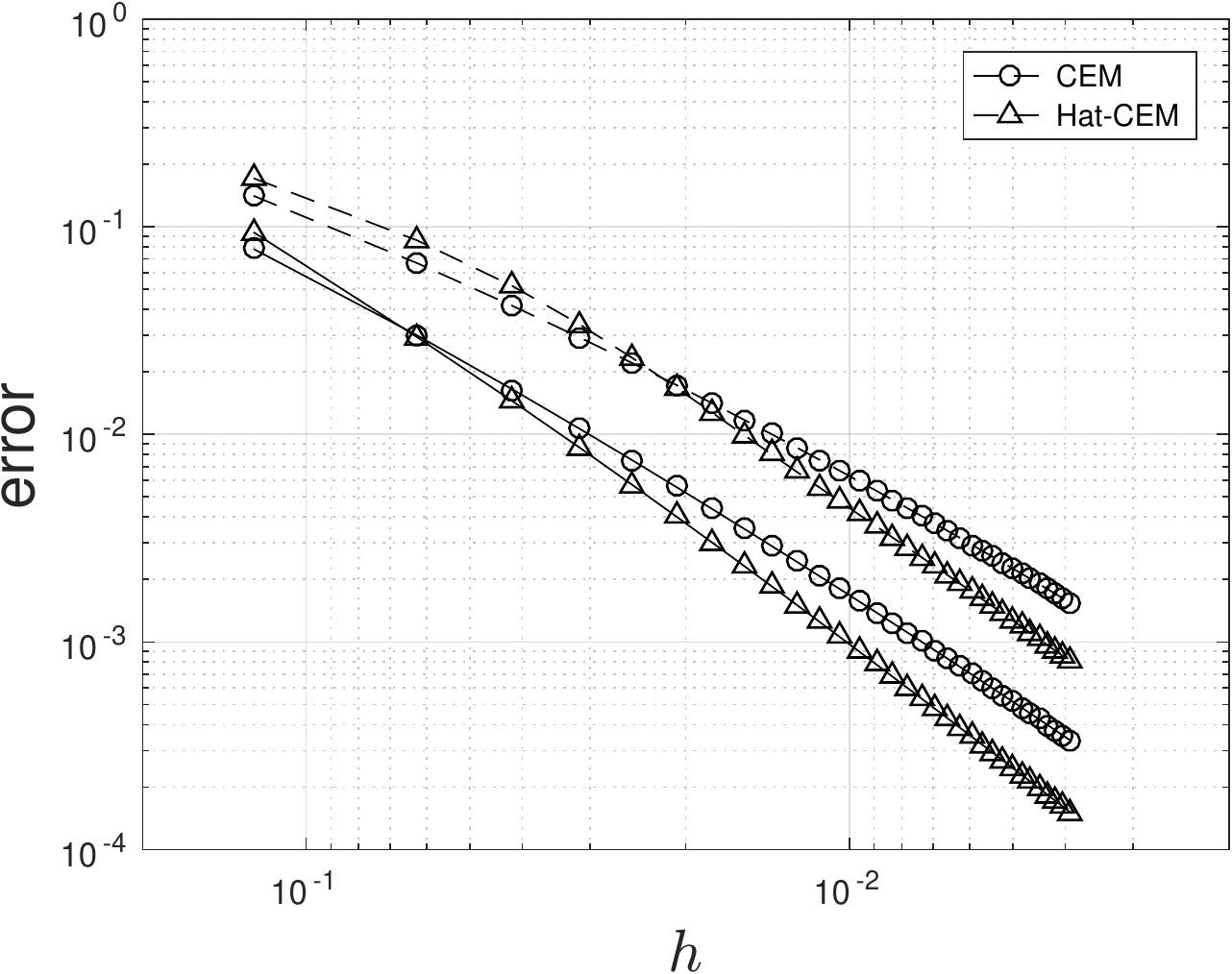}}
\qquad
{\includegraphics[scale=.4]{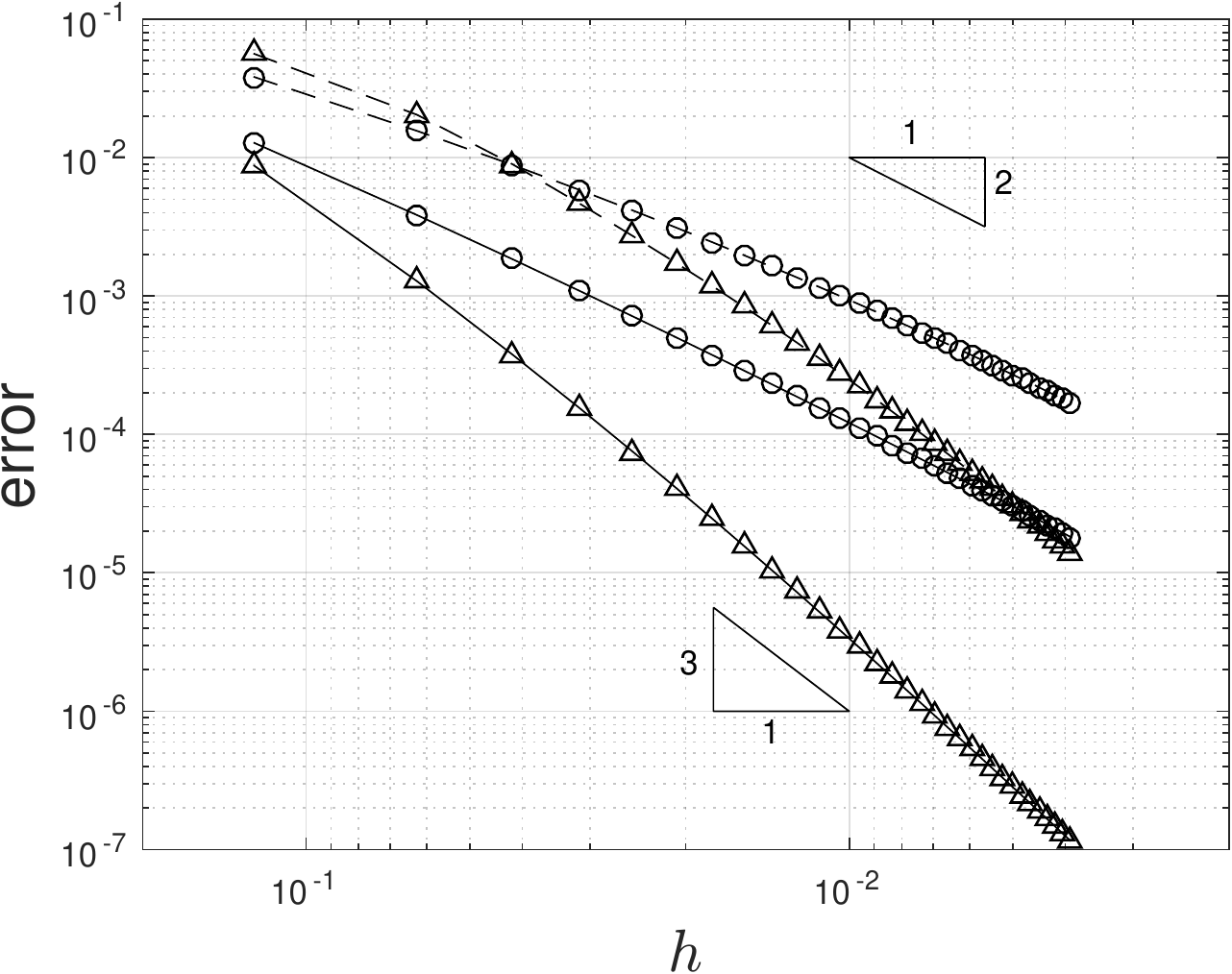}}
}
\caption{Relative error of the electrode potential $U$ as a function of the mesh size parameter $h$ for the two conductance models and two conductivity/conductance ratios. Left:\ Piecewise linear FEM. Right:\ Quadratic FEM. The dashed (i.e., upper) curves correspond to a lower ratio $\sigma / \zeta_\text{el}$ that is closer to the shunt model.}
\label{fig:uconv}
\end{figure}

The left-hand plot in Figure~\ref{fig:uconv} demonstrates that when using piecewise linear finite elements, the smoothened model converges 
faster toward its reference solution, but the difference cannot be considered significant.
Both models seem to exhibit an asymptotic decay rate of $h^{2}$ or a bit less. For the standard CEM, this is in line with the material in \cite{Darde16}, where the rate $h^{2 - \epsilon}$, $\epsilon > 0$, is predicted; for the smoothened hat-CEM, one would expect to gain the aforementioned $\epsilon > 0$ that is due to the solution of the standard CEM only lying in $\mathcal{H}^{2- \epsilon}$.
On the other hand, there is a big difference between the two conductance values:\ the further the values are from the shunt model (i.e., from $\zeta= \infty$), the more accurate the numerical solutions are for a given mesh. This observation is also in tune with \cite{Darde16}. The dashed curves, which correspond to the values in~\cite{Darde13b}, clearly show the numerical difficulties that appear when approaching the shunt model.

There is a greater difference between the models when quadratic elements are used. The right-hand plot in Figure~\ref{fig:uconv} indicates that the asymptotic convergence rate of the smoothened model increases to approximately $h^{3}$, whereas the rate for the standard model stays at about $h^{2}$. In fact, an extrapolation of the argumentation in \cite{Darde16} suggests that the asymptotic rates should be $h^{3 - \epsilon}$ and $h^{2 - \epsilon}$, respectively, with the smoothness of the forward solutions ($\mathcal{H}^{3-\epsilon}$ and $\mathcal{H}^{2-\epsilon}$, respectively) imposing in both cases the upper bound on the speed of convergence. In particular, it is to be expected that the use of, say, third order elements would improve the asymptotic convergence rate only if the model for the contact conductance were also further smoothened; see Theorem~\ref{thm:smoothness} and \cite{Larson13}. 
Once again, more accurate solutions are obtained for the higher ratio $\sigma / \zeta_\text{el}$,
that is, when the contacts are worse and the setting is further away from the shunt model (cf.~\cite{Darde16}).

\begin{figure}
\center{
{\includegraphics[height=1.8in]{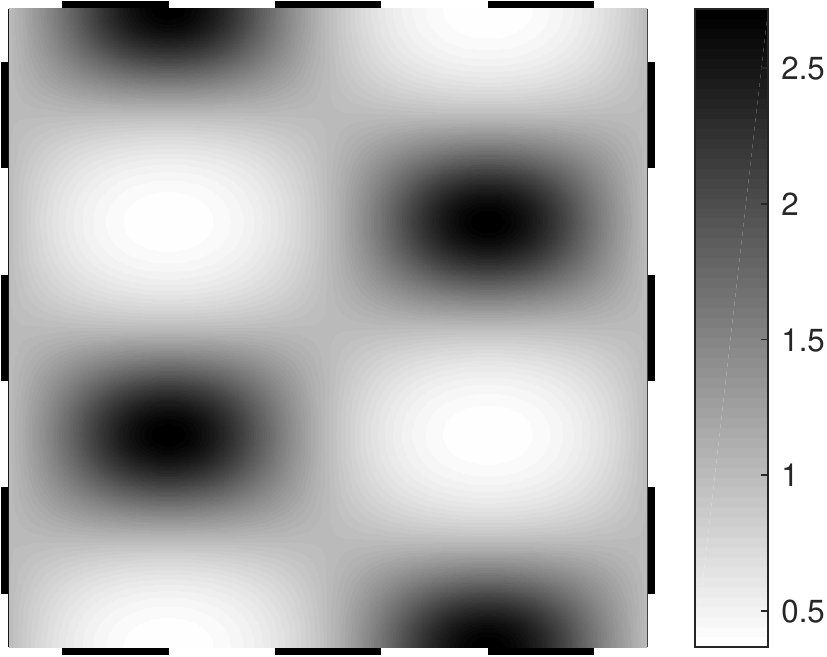}}
\qquad
{\includegraphics[scale=.4]{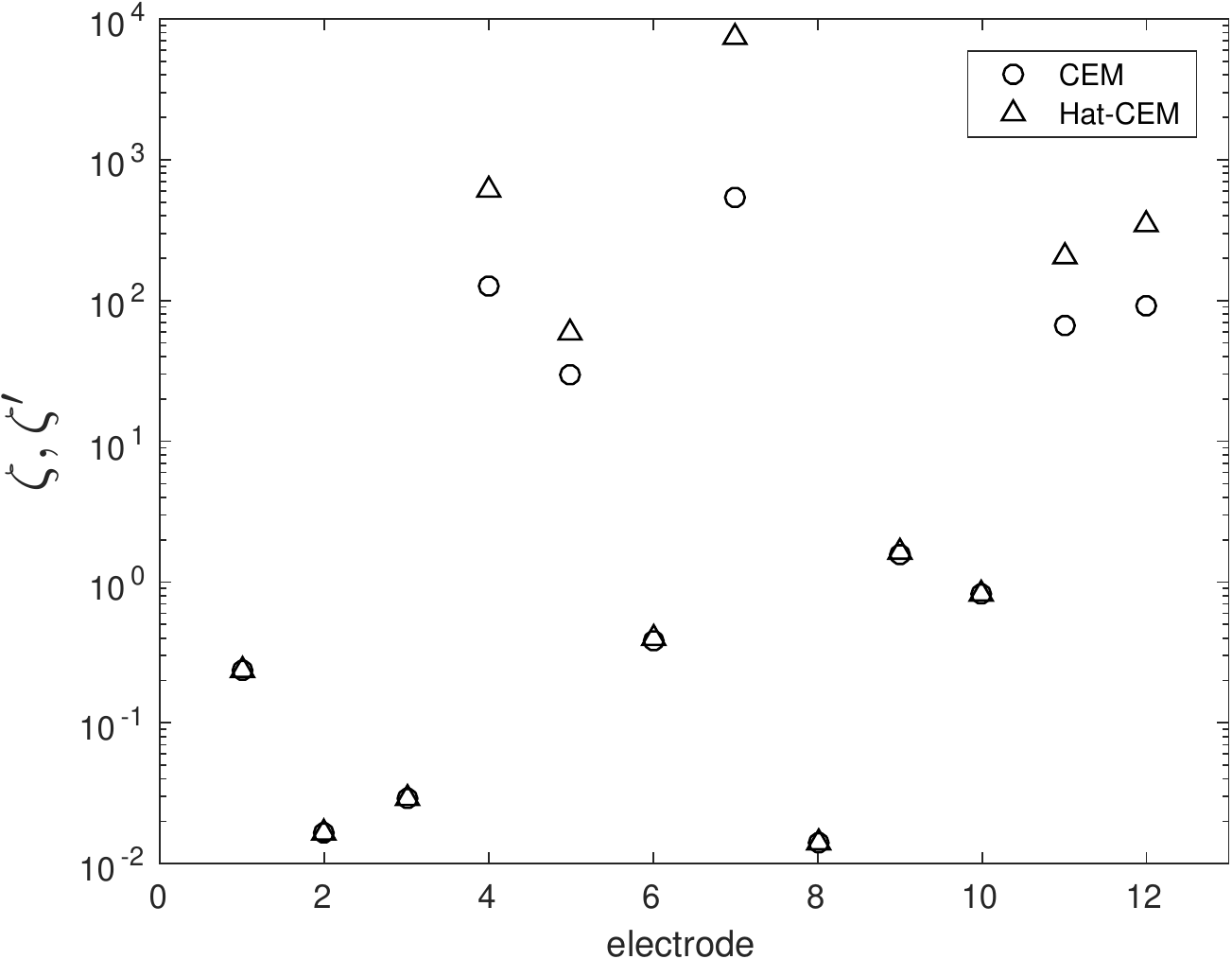}}
}
\caption{The conductivity phantom (left) and the contact conductance parameters (right) for the convergence tests of Figure~\ref{fig:uconv2}. The circles depict the conductances for the traditional CEM, the triangles the half-heights of the corresponding hat-shaped conductances for the smoothened CEM (cf.~Figure~\ref{fig:potdiff}). The electrodes are shown as thick line segments and they are numbered counter-clockwise starting from the bottom left corner.}
\label{fig:phantom}
\end{figure}

To confirm the above findings about the convergence of FEM, we repeat the numerical experiment with the constant conductivity replaced by the phantom shown on left in Figure~\ref{fig:phantom}. This time there are twelve electrodes attached to the boundary of the unit square, and the contact conductances shown on right in  Figure~\ref{fig:phantom} are chosen randomly so that their ratios with the mean of the conductivity phantom cover approximately the whole scale on the horizontal axes of the images in Figure~\ref{fig:potdiff}. Figure~\ref{fig:uconv2} presents the corresponding results. It is organized in the same way as Figure~\ref{fig:uconv}, that is, the left-hand image illustrates the convergence for the two models with piecewise linear FEM, whereas the right-hand image considers quadratic FEM. The conclusions are the same as for the first experiment: For piecewise linear basis functions both models exhibit convergence rates of approximately $h^2$, with the smoothened CEM converging slightly faster asymptotically. For the quadratic basis functions, the smoothened model clearly prevails with a decay rate $h^3$ compared to $h^2$ for the traditional model.

\begin{figure}
\center{
{\includegraphics[scale=.4]{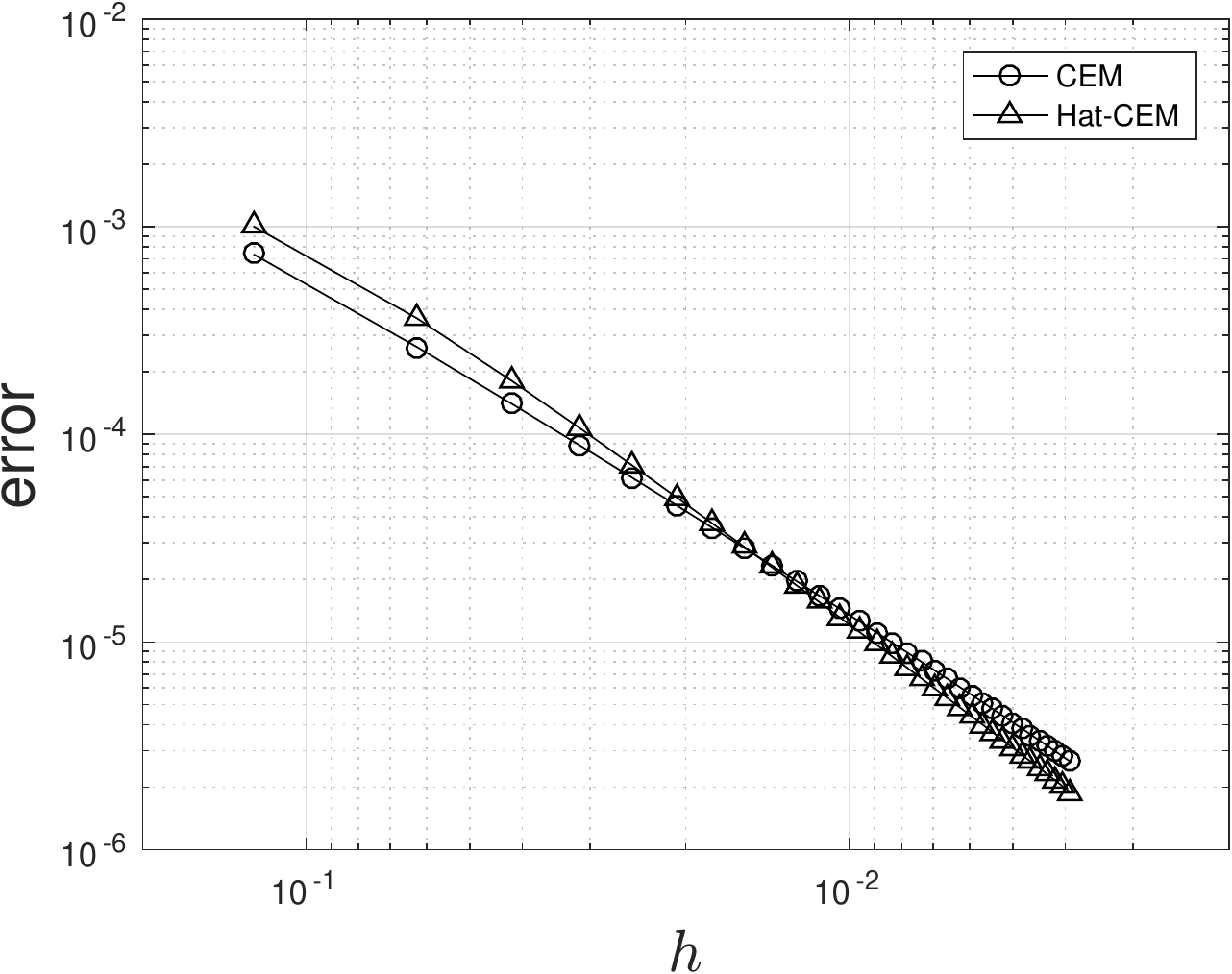}}
\qquad
{\includegraphics[scale=.4]{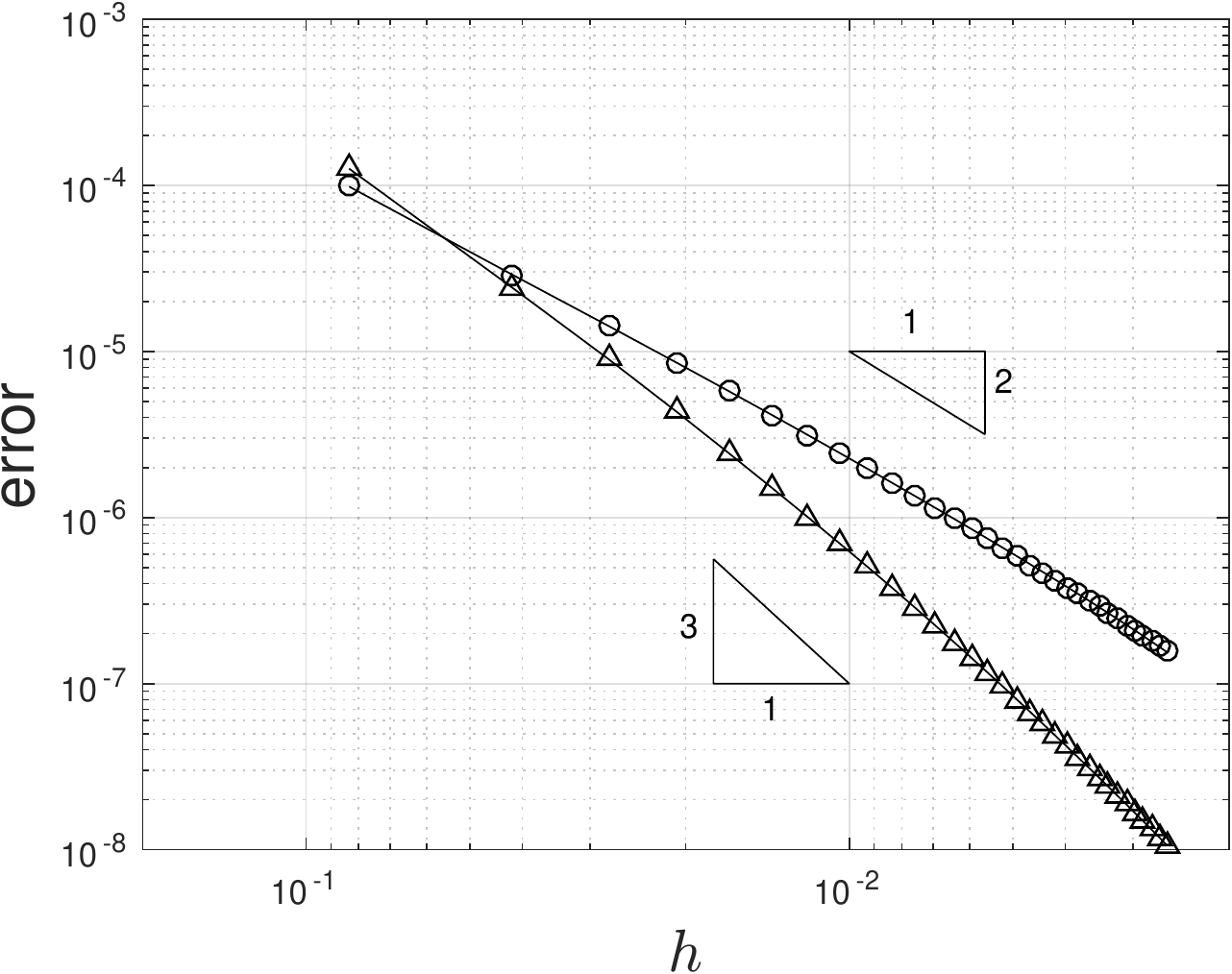}}
}
\caption{Relative error of the electrode potential $U$ as a function of the mesh size parameter $h$ for the two conductance models and for the conductivity phantom and contact conductance parameters shown in Figure~\ref{fig:phantom}. Left:\ Piecewise linear FEM. Right:\ Quadratic FEM.}
\label{fig:uconv2}
\end{figure}

The difference between the standard CEM and the new smoothened version becomes apparent even for the piecewise linear FEM when studying the convergence of the numerical Fr\'echet derivatives.
To demonstrate this, we consider the integrals \eqref{eq:sampling1}--\eqref{eq:sampling3} with the choices $\sigma = 1$ and $h_\nu = \|h_\tau\|_2  = 1$
as well as $\kappa|_E = 0$ dictated by the geometry.
More precisely, we numerically evaluate the integrals
\begin{align}
\label{eq:newsampling1}
\mathcal{I}_1^{(m,n)} &:= \int_{\partial \Omega} \zeta^2 (U^{(m)}-u^{(m)}) (U^{(n)}-u^{(n)}) \, {\rm d}S, \\
\label{eq:newsampling2}
\mathcal{I}_2^{(m,n)} &:= \int_{\partial \Omega} 
\dot{\zeta} (U^{(m)}-u^{(m)}) (U^{(n)}-u^{(n)}) \, {\rm d}S, \\
\label{eq:newsampling3}
\mathcal{I}_2^{(m,n)} &:= \int_{\partial \Omega} (\nabla u^{(m)})_\tau \cdot (\nabla u^{(n)})_\tau \, {\rm d}S
\end{align}
based on approximations of the solutions to \eqref{eq:cemeqs} on different finite element meshes. Here $\dot{\zeta}$ denotes the derivative of the contact conductance with respect to the arc\-length parameter. Notice that $\dot{\zeta}$ becomes a linear combination of delta distributions and $\mathcal{I}_2^{(m,n)}$ a linear combination of pointwise evaluations of the integrand $(U^{(m)}-u^{(m)}) (U^{(n)}-u^{(n)})$ at the end points of the electrodes when the traditional CEM is considered \cite{Darde13a,Darde13b}.

\begin{figure}
\center{
{\includegraphics[scale=.4]{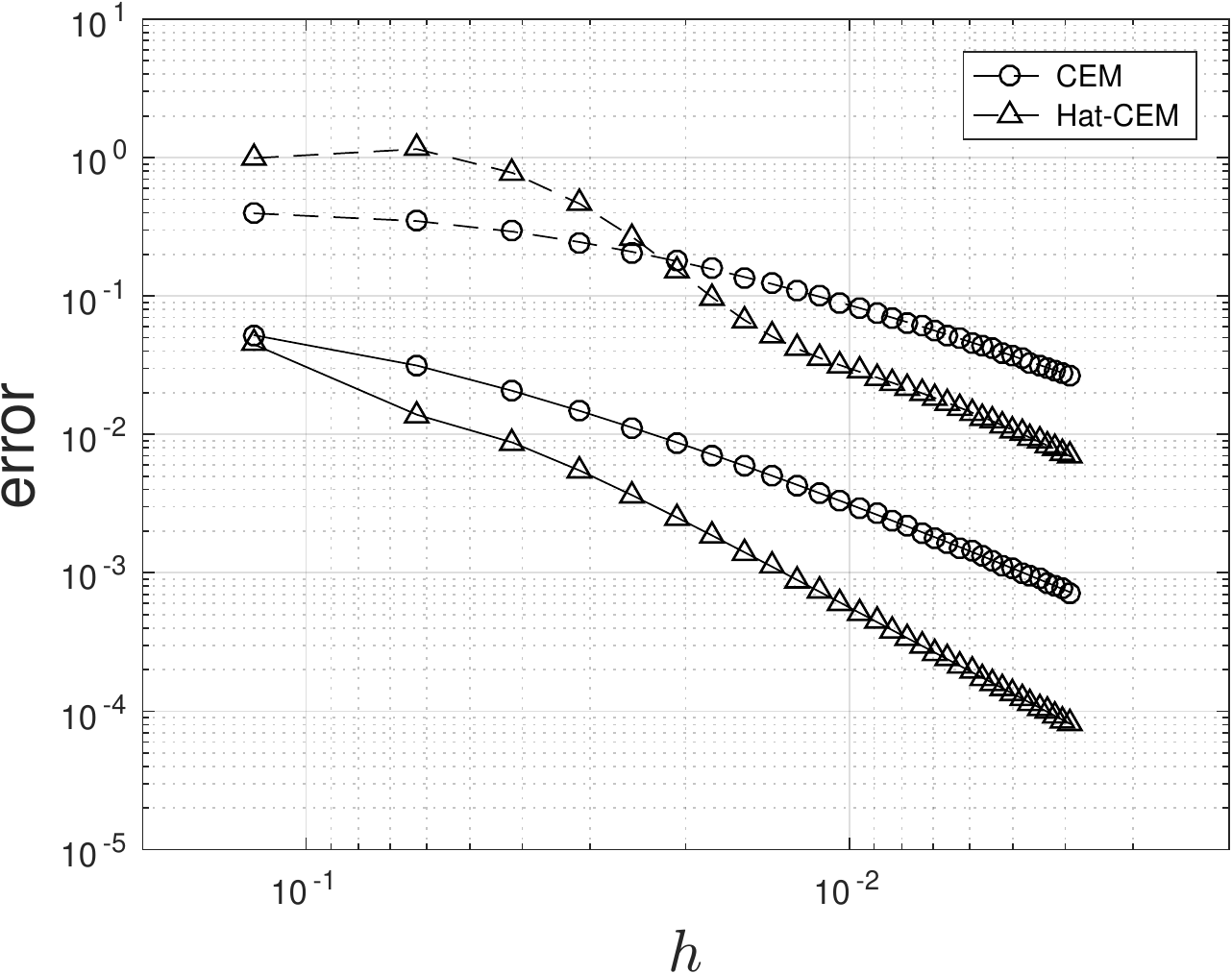}}
}
\center{
{\includegraphics[scale=.4]{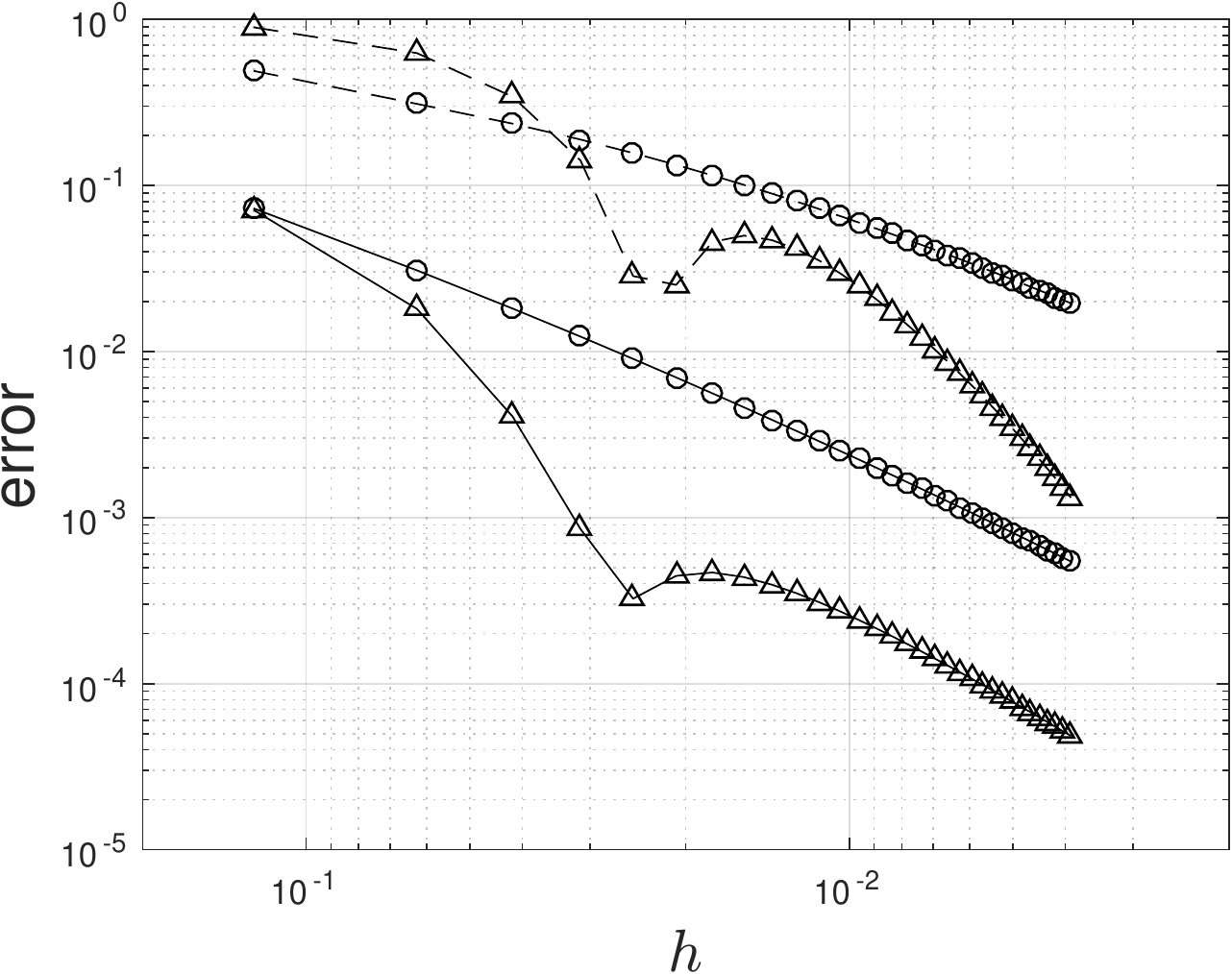}}
\qquad
{\includegraphics[scale=.4]{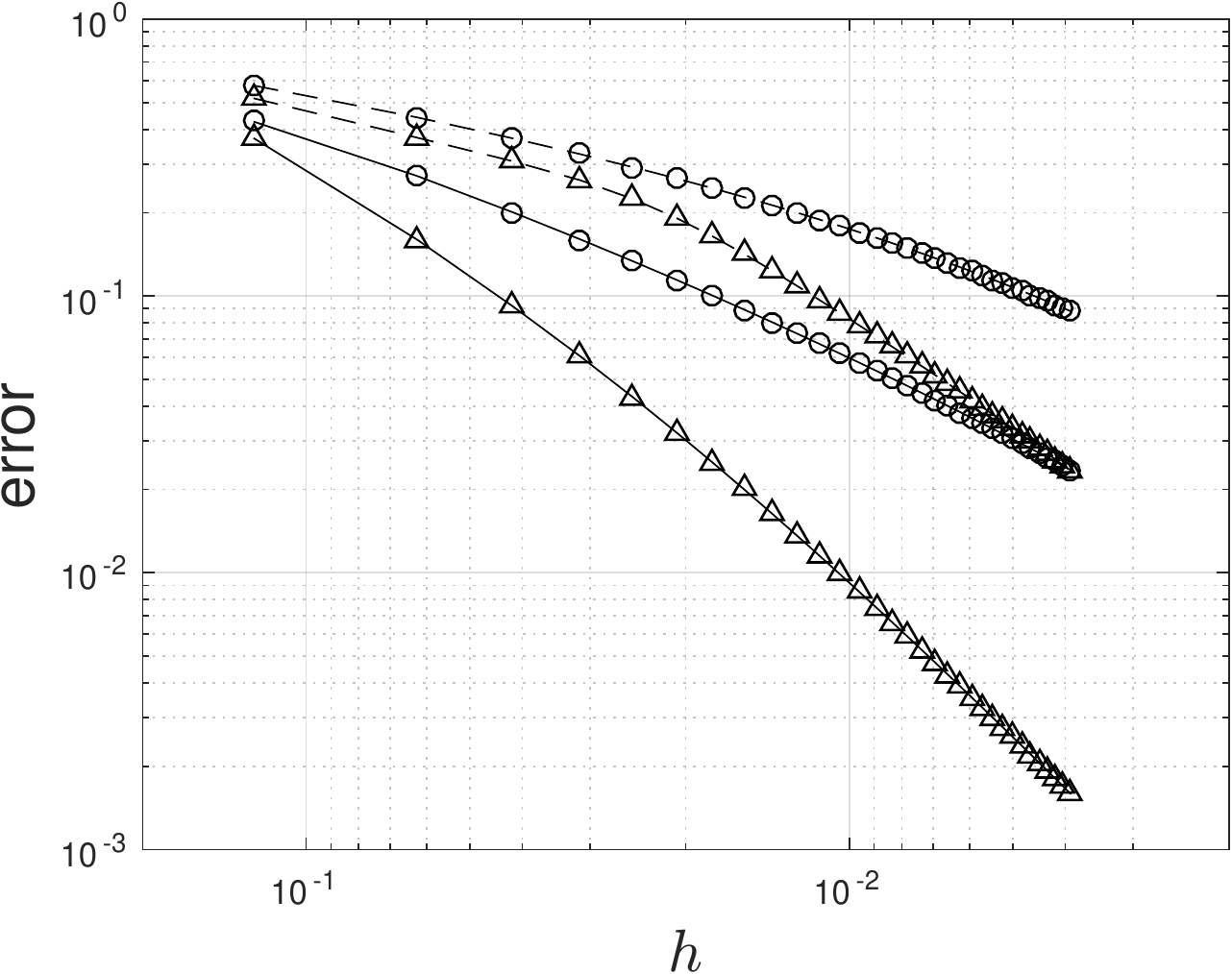}}
}
\caption{Relative errors of the three shape derivative integrals as functions of the mesh size parameter $h$ for the two conductance models and two conductivity/conductance ratios. Top: integral \eqref{eq:newsampling1}. Bottom left: integral \eqref{eq:newsampling2}. Bottom right: integral \eqref{eq:newsampling3}. The dashed (i.e., upper) curves correspond to a lower ratio $\sigma / \zeta_\text{el}$ that is closer to the shunt model. All solutions are computed by using piecewise linear elements.}
\label{fig:dconv}
\end{figure}

We only consider the setup with eight electrodes depicted in Figure~\ref{fig:setup} as well as a constant conductivity and two sets of identical contact conductances so that either $\sigma / \zeta_\text{el} \approx 50 \cdot 10^{-3} \,\mathrm{m}$ or $\sigma / \zeta_\text{el} \approx 4 \cdot 10^{-3} \,\mathrm{m}$, which correspond to $\sigma/\zeta_\text{el}' \approx 30 \cdot 10^{-3} \,\mathrm{m}$ or $\sigma/\zeta_\text{el}' \approx 0.5 \cdot 10^{-3} \,\mathrm{m}$, respectively (cf.~the bottom image of Figure~\ref{fig:potdiff}). In other words, the parameter choices are as in Figure~\ref{fig:uconv}.  The corresponding results for the inhomogeneous phantom and the contact conductances of Figure~\ref{fig:phantom} would be analogous.
The employed current patterns and their correspondence to the forward solutions of \eqref{eq:cemeqs} in \eqref{eq:newsampling1}--\eqref{eq:newsampling3} are as before, and the relative errors are computed via
\begin{equation*}
\delta_i := \left( \sum_{m=1}^{M-1} \sum_{n=1}^m \left\lvert \mathcal{I}_i^{(m,n)} - \tilde{\mathcal{I}}_i^{(m,n)} \right\rvert^2 \right)^{1/2} \Bigg/
       \left( \sum_{m=1}^{M-1} \sum_{n=1}^m \left\lvert \tilde{\mathcal{I}}_i^{(m,n)} \right\rvert^2 \right)^{1/2},
\end{equation*}
where tilde denotes an integral computed using the appropriate reference solutions on the densest mesh with $2049^2$ nodes. The results for $\delta_i, i=1,2,3$ are plotted in Figure~\ref{fig:dconv}, which demonstrates that the smoothened model is considerably more accurate than the traditional CEM, except for very coarse meshes. In addition, the difference between the two conductance values is again clearly visible.

\subsection{Comparison to experimental EIT data}
\label{sec:reco0}

The compatibility of the proposed smoothened model with real measurements is studied with data from a water tank shown on left in Figure~\ref{fig:reco0}. There are 16 electrodes of width $2 \,\mathrm{cm}$ attached to the interior lateral surface of the tank, extending from the bottom all the way up to the water surface. The circumference of the tank is $106 \,\mathrm{cm}$. The actual measurements were performed with low-frequency (1\,kHz) alternating current using the {\em Kuopio impedance tomography} (KIT4) device~\cite{Kourunen09}. The phase information is ignored and the data interpreted as if resulting from the use of direct current, which is reasonable due to the insignificance of capacitive effects at low temporal frequencies \cite{Vauhkonen97}, as explained at the beginning of Section~\ref{sec:numerics}.
The geometry, which is essentially two-dimensional because the electrodes are of the same height as the water layer (see, e.g., \cite{Leinonen14}), is known up to unavoidable mismodeling due to small imperfections in the construction of the tank. The domain is discretized for the potential with triangular elements having $12\,218$ nodes. Each electrode is divided into approximately $10$ element edges.

We make the reasonable assumption that the conductivity of the water layer within the tank is homogeneous and aim at reconstructing its value and the contact conductance parameters by solving the minimization problem
\begin{equation}
\label{eq:lsquares}
\argmin_y \, \lVert \mathcal{U}(y) - \tilde{\mathcal{U}} \rVert_2^2
\end{equation}
where $y \in \R_+^{1+16}$ represents the unknowns. The vector $\tilde{\mathcal{U}} \in \R^{240}$ contains all measured electrode potentials and the values in $\mathcal{U}(y)$ are the computed potentials for the employed fifteen linearly independent current patterns and a given parameter vector $y$. For the traditional CEM, the latter sixteen components of $y$ are the contact conductances, whereas for the smoothened CEM they represent the half-heights of the hat-shaped conductance functions. 

The employed minimization algorithm is based on Levenberg--Marquardt method.
The initial guesses for the constant conductivity and contact conductances were chosen as $\sigma=0.25\, \mathrm{mS}/\mathrm{cm}$ and $\zeta_\text{el}=10\, \mathrm{mS}/\mathrm{cm}^2$, respectively, for the traditional CEM. For the smoothened CEM, the values $\sigma=0.25\, \mathrm{mS}/\mathrm{cm}$ and $\zeta_\text{el}'=70\, \mathrm{mS}/\mathrm{cm}^2$ were used. The chosen initial values for $\sigma$ and $\zeta_\text{el}$ are close to the ones used in \cite{Hyvonen17} and the ratio $\zeta_\text{el} / \zeta_\text{el}'$ agrees with the appropriate value on the graph in the bottom image of Figure~\ref{fig:potdiff}. 
In practice, one need not have such a graph available since the minimization process does not seem to be sensitive to the initial guess for $\zeta_\text{el}'$; this conclusion also  applies to the considerations in Section~\ref{sec:reco}.
The three-dimensional quantities for the conductivity and the conductance can be converted to their two-dimensional counterparts by multiplying with the height of the tank which is $5\; \mathrm{cm}$.
For both models, the minimization algorithm converged without any complications.

\begin{figure}
\center{
{\includegraphics[scale=.215]{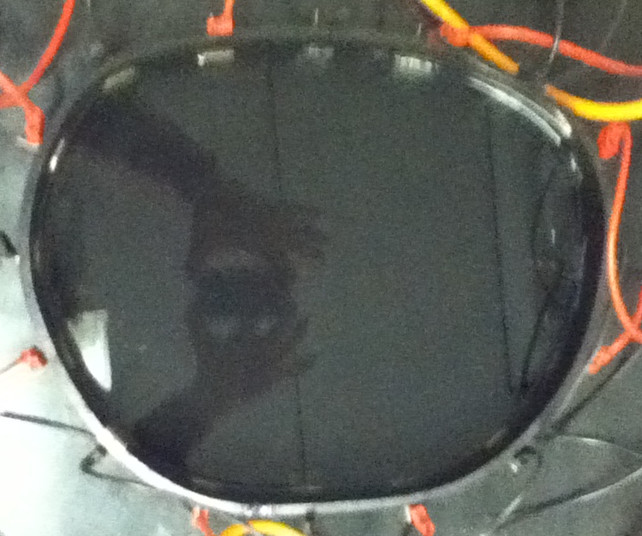}}
\qquad
{\includegraphics[scale=.4]{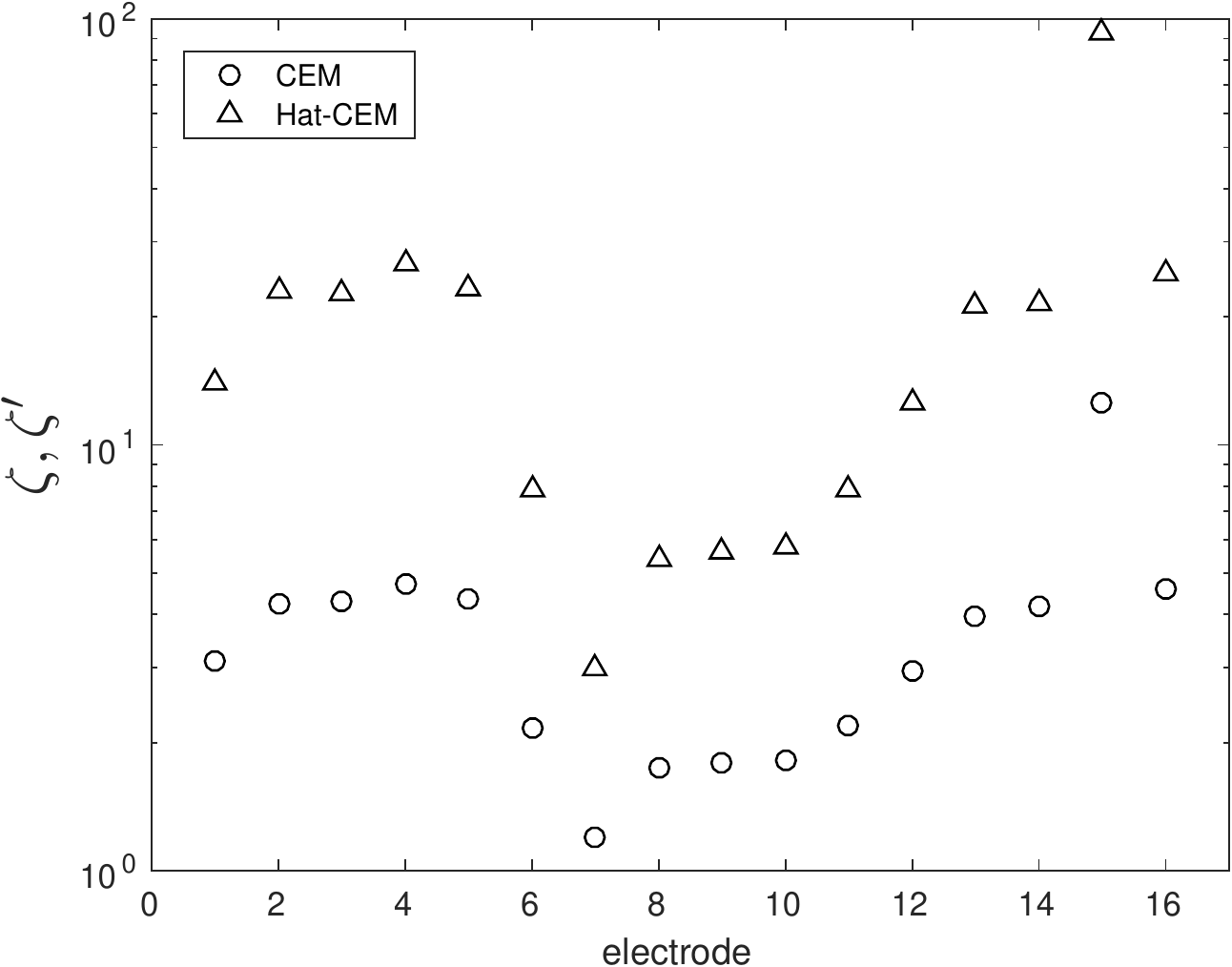}}
}
\caption{Left:\ Photo of the tank filled with Finnish tap water. Right:\ Contact conductance parameters giving the best match with the data. The circles depict the conductances for the traditional CEM, the triangles the half-heights of the hat-shaped conductances for the smoothened CEM. The unit of conductance is $\mathrm{mS}/\mathrm{cm}^2$.}
\label{fig:reco0}
\end{figure}

The optimal values for the constant conductivity level are $\sigma = 0.22722\, \mathrm{mS}/\mathrm{cm}$ for the traditional model and $\sigma = 0.22720\, \mathrm{mS}/\mathrm{cm}$ for the smoothened one. The corresponding contact conductance parameters for the two models are shown on right in Figure~\ref{fig:reco0}. The results are well in line with the bottom image of Figure~\ref{fig:potdiff}: As the contacts are good in water tank experiments, i.e., the ratio $\sigma/\zeta$ is low, one expects the half-heights of the hat-like conductances to be substantially higher than the constant contact conductances for the traditional CEM. The minimal relative discrepancies between the measurements and the two models, i.e.,
\begin{equation*}
\frac{\lVert \mathcal{U}(y^*) - \tilde{\mathcal{U}} \rVert_2}{\lVert \tilde{\mathcal{U}} \rVert_2}
\end{equation*}
where $y^* \in \R^{17}$ is the respective solution of \eqref{eq:lsquares}, was $1.20 \%$ for the traditional CEM and $1.21 \%$ for the smoothened CEM with hat-like conductances. If one considers the discrepancy in comparison to the maximal variation in the measurements, i.e., replaces $\lVert \tilde{\mathcal{U}} \rVert_2$ in the denominator by  $\sqrt{240}\, \max_{1 \leq i,j \leq 240} \lvert \tilde{\mathcal{U}}_i-\tilde{\mathcal{U}}_j \rvert$,
the numbers are $0.220\%$ for the traditional model and $0.221\%$ for the smoothened one. In particular, according to this single experiment, the two models seem to be in approximately as good accordance with real-world EIT data.

The relative discrepancies listed above are somewhat higher than the expected noise level in the data (cf.~\cite{Kourunen09}). Part of this extra mismatch probably originates from small errors in the model for the measurement configuration; absolute EIT is known to be extremely sensitive to geometric mismodeling~\cite{Kolehmainen97}. In addition, treating the data as if it originated from direct current measurements may have a small effect on the results. In particular, the evidence about the validity of the smoothened model presented here is not conclusive, but it needs to be confirmed by more carefully designed experimental studies in the future.

\subsection{Reconstructions from experimental EIT data}
\label{sec:reco}
Let us next consider the same water tank as in the previous section, but this time with one embedded insulating cylindrical inclusion of radius $3.5\, {\rm cm}$ made out of plastic; see the top image in Figure~\ref{fig:reco}.
This time the (two-dimensional) domain is discretized 
for the potential and the conductivity
with triangular elements having $8004$ nodes. 
Each electrode is again divided into approximately $10$ element edges. Our goal is to compute reconstructions of the conductivity phantom by using the traditional and the smoothened CEM and to demonstrate that the choice between the two models does not have a significant effect on the outcome. 
In particular, our objective is not to exploit the faster convergence of FEM approximations for the smoothened model by employing sparser meshes or/and higher order elements, but such considerations are left for future studies.

The reconstruction algorithm aims at computing a MAP estimate
\begin{equation}
\label{eq:MAP}
\argmin_y \left\{ \lVert \mathcal{U}(y) - \tilde{\mathcal{U}} \rVert_2^2 + \lVert G (y-y_0) \rVert_2^2 \right\}
\end{equation}
for the unknown parameter vector $y \in \R_+^{8004+16}$ that represents the discretized conductivity field and the contact conductance values and whose expected value is $y_0$.
As in the previous section, the vector $\tilde{\mathcal{U}} \in \R^{240}$ contains all measured electrode potentials and the values in $\mathcal{U}(y)$ are the computed potentials for the employed fifteen linearly independent current patterns and a given parameter vector $y$.
The positive semidefinite matrix $G$ originates from combining a prior distribution for the conductivity with the assumption that each measured electrode potential is corrupted by an independent realization of a normally distributed random variable with zero mean and standard deviation $2 \cdot 10^{-3} \max_{1 \leq i,j \leq 240} \lvert \tilde{\mathcal{U}}_i-\tilde{\mathcal{U}}_j \rvert$, 
the choice of which is motivated by the relative discrepancies listed in the previous section.
More precisely, it is formally assumed that the conductivity is {\em a~priori} a Gaussian random field with correlation length $4\,\mathrm{cm}$, pointwise standard deviation $0.25\,\mathrm{mS}/\mathrm{cm}$, and a constant expectation function $0.25\,\mathrm{mS}/\mathrm{cm}$ that is close to the conductivity of Finnish tap water. 
The correlation length is intentionally chosen to be of the same order as the radius of the inclusion, i.e., it corresponds to the size of inhomogeneities we expect to find inside the tank. Choosing a longer correlation length would blur the reconstruction of the inclusion, whereas employing a significantly shorter correlation length would lead to unwanted oscillations in the reconstruction of the background conductivity level.
The contact conductances are estimated without further prior knowledge and thus the corresponding parts of $G$ and $y_0$ are empty.
The employed minimization algorithm is again based on Levenberg--Marquardt method.
We refer to,~e.g.,~\cite{Darde13b,Kaipio05} for more details on Bayesian inversion.

\begin{figure}
\includegraphics[scale=.18]{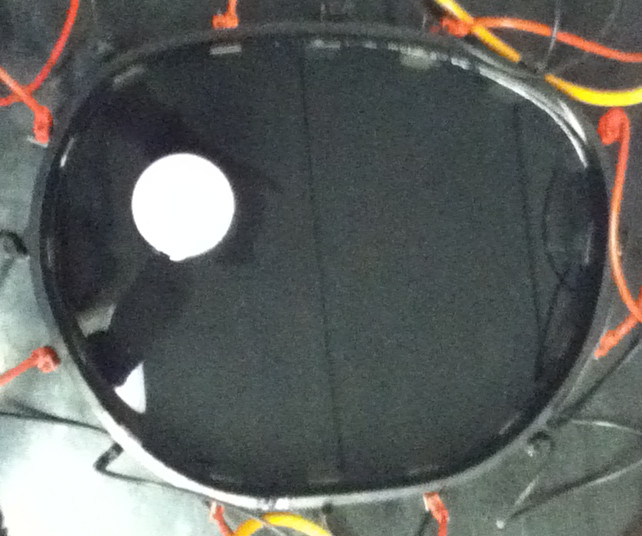}
\center{
{\includegraphics[scale=.35]{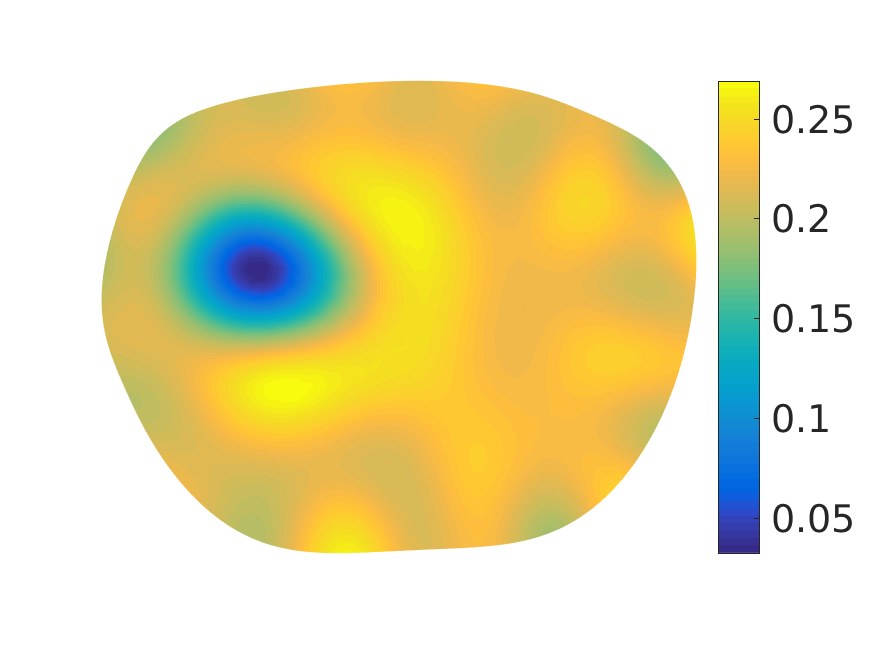}}
\qquad
{\includegraphics[scale=.35]{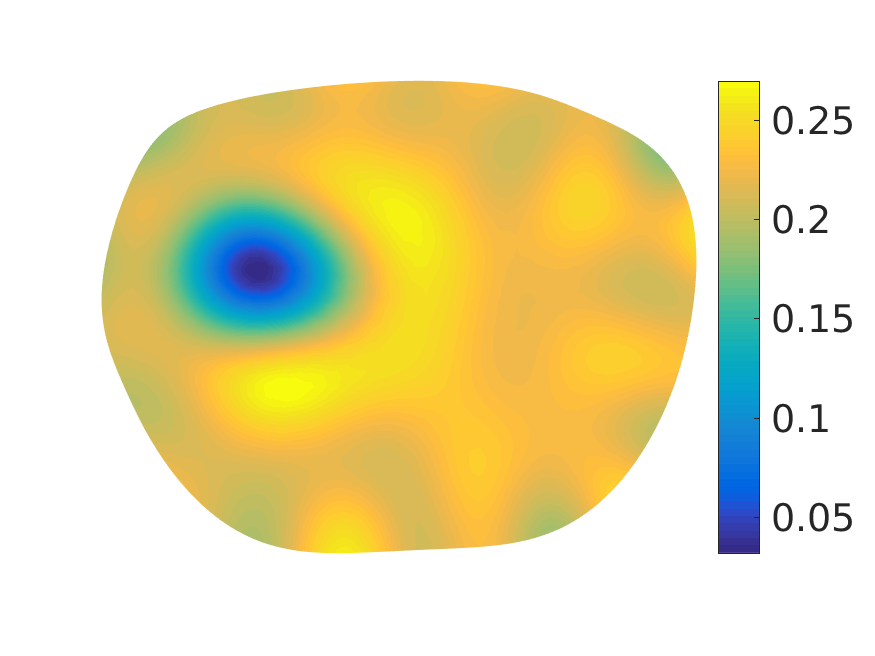}}
}
\caption{Top:\ Photo of the water tank with one insulating inhomogeneity. Left:\ Reconstruction based on the traditional CEM. Right:\ Reconstruction using the proposed hat function model for the contact conductances. The unit of conductivity is $\mathrm{mS}/\mathrm{cm}$.}
\label{fig:reco}
\end{figure}

The initial values for the conductivity and the contact conductance parameters are the same as in the previous section.  In particular, the initial guess/expected value for the conductivity is homogeneous.
The reconstruction corresponding to the traditional CEM is presented on the bottom left in Figure~\ref{fig:reco} and the one corresponding to the smoothened model with hat-like conductances on the bottom right.
The results are remarkably similar, 
which is what we have observed with other water tank experiments as well. 
In fact, the reconstruction obtained by using the smoothened model is practically indistinguishable from the one corresponding to the traditional CEM: their relative discrepancy in the $L^2(\Omega)$ norm is just $0.3 \%$.
Both reconstructions also clearly indicate the location of the insulating inclusion as a region of almost vanishing conductivity and the background conductivity level is close to the values obtained in the previous section. 
Thus, the proposed model seems to be as compatible with experimental measurements as the traditional CEM, at least in the examined water tank setting where the contact conductances are relatively high. The reason may simply be that the difference between the two models is quite small when the conductance functions are scaled properly, as demonstrated by the top right plot in Figure~\ref{fig:potdiff}. On the other hand, it cannot be ruled out that the real physical phenomena at the electrode contacts may even be more accurately described by some variant of the proposed smoothened CEM.

\section{Concluding remarks}
\label{sec:conclusion}
We have introduced a smoothened version of the CEM for EIT.
The new model retains the essential solvability and differentiability properties, while the regularity of the solution can be arbitrarily improved by choosing an appropriate smoothened conductance for the electrode contacts.
It was numerically demonstrated that at least the simplest, piecewise linear smoothening is almost equivalent to the standard CEM, if the hat-shaped conductance functions are scaled properly. What is more, the presented EIT reconstruction with the proposed model is almost indistinguishable from the one obtained with the traditional CEM. The computational feasibility of the new model was also demonstrated by the superior convergence of the FEM. 

To summarize, we recommend using some variant of the smoothened CEM especially if one wants to exploit a higher-order FEM solver or needs to compute numerical shape derivatives. However, even the proposed smoothened model cannot overcome the numerical problems related to very high contact conductances that appear when the measurement setting approaches the so called shunt model.

% Bibliography using bibtex
\bibliographystyle{acm}
\bibliography{smoothcem-refs}

\end{document}